\newcommand{\Mypm}{\mathbin{\tikz [x=1.4ex,y=1.4ex,line width=.1ex] \draw (0.0,0) -- (1.0,0) (0.5,0.08) -- (0.5,0.92) (0.0,0.5) -- (1.0,0.5);}}%
\newcommand{\kaarthik}[1]{\textcolor{black}{#1}}
\newcommand{\harsha}[1]{\textcolor{black}{#1}}
\algnewcommand\algorithmicinput{\textbf{Input:}}
\algnewcommand\Input{\item[\algorithmicinput]}
\algnewcommand\algorithmicoutput{\textbf{Output:}}
\algnewcommand\Output{\item[\algorithmicoutput]}
\begin{document}

\title{An Adaptive, Multivariate Partitioning Algorithm for Global Optimization of Nonconvex Programs}


\author{}
 \author{Harsha Nagarajan \and Mowen Lu \and Site Wang \and Russell Bent \and Kaarthik Sundar}

 \institute{Harsha Nagarajan \and Russell Bent \and Kaarthik Sundar\at
                Center for Nonlinear Studies, Los Alamos National Laboratory \\
               \email{\{harsha, rbent, kaarthik\}@lanl.gov}           
           \and
           Mowen Lu \and Site Wang \at
           Department of Industrial Engineering,
           Clemson University \\
           \email{\{mlu87, sitew\}@g.clemson.edu}
 }

\date{Received: date / Accepted: date}

\maketitle

\begin{abstract}
\kaarthik{In this work, we develop an adaptive, multivariate partitioning algorithm for solving nonconvex, Mixed-Integer Nonlinear Programs (MINLPs) with polynomial functions to global optimality. In particular, we present an iterative algorithm that exploits piecewise, convex relaxation approaches via disjunctive formulations to solve MINLPs that is different than conventional spatial branch-and-bound approaches. The algorithm partitions the domains of variables in an adaptive and non-uniform manner at every iteration to focus on productive areas of the search space. Furthermore, domain reduction techniques based on sequential, optimization-based bound-tightening and piecewise relaxation techniques, as a part of a presolve step, are integrated into the main algorithm. Finally, we demonstrate the effectiveness of the algorithm on well-known benchmark problems (including Pooling and Blending instances) from MINLPLib and compare our algorithm with state-of-the-art global optimization solvers. With our novel approach, we solve several large-scale instances, some of which are not solvable by state-of-the-art solvers. We also succeed in reducing the best known optimality gap for a hard, generalized pooling problem instance.}

\end{abstract}

\section{Introduction}
Mixed-Integer Nonlinear Programs (MINLPs) are convex/non-convex, mathematical programs that include discrete variables and nonlinear terms in the objective function and/or constraints. 
In practice, non-convex MINLPs arise in many applications such as chemical engineering (synthesis of process and water networks) \cite{meyer2006global,ryoo1995global}, energy infrastructure networks \cite{hijazi2016,lu_optimal_tps2017,nagarajan2016optimal}, and in molecular distance geometry problems \cite{liberti2008branch}, to name a few. 
\kaarthik{
Given the importance of these problems, considerable research has been devoted to developing approaches for solving MINLPs, such as approaches implemented in such solvers as
BARON \cite{sahinidis1996baron}, Couenne \cite{belotti2009branching} and SCIP \cite{achterberg2009scip}. Within these approaches, two of the key features of successful methods include MINLP relaxations and search. For example, in a typical solver, 
non-convex terms are replaced with convex over- and under-estimators \cite{boukouvala2016global}. The resulting convex optimization problem is a relaxation of the original MINLP and its solution is a bound to the optimal objective value of the MINLP. These relaxations are then used in conjunction with a search procedure, like spatial branch-and-bound (sBB), to explore the solution space of the MINLP and identify the global optimal solution.}

Despite major developments related to these features and others, MINLPs still remain difficult to solve and global optimization solvers often struggle to find optimal solutions and at times, even a feasible solution. In many cases, the source of these struggles are weak relaxations of the MINLP \kaarthik{and the impact weak relaxations on the size of the search space that is explored. 
To address these difficulties, in this paper we develop an approach for deriving better bounds through piecewise convex relaxations that are modeled as mixed-integer convex optimization problems.
The piecewise convex relaxations are combined with additional algorithmic enhancements, a novel, adaptive domain partitioning scheme, and successive solves of mixed-integer problems (MIP), to produce a novel search procedure. This global optimization algorithm 
is tested extensively on MINLPs with polynomial constraints, including the well-known and hard Pooling and Blending instances \cite{kolodziej2013discretization} and is compared with state-of-the-art global optimization approaches.  In this paper, we focus on MINLPs with polynomial constraints, but the approach is fairly generic and can be generalized to other nonconvex functions. Finally, for ease of exposition, we assume the MINLPs are minimization problems throughout the rest of the paper.
}

\kaarthik{ 
We next discuss the key contributions we make in this paper. Our first contribution improves convex relaxations of polynomial functions. Here, we develop an approach based on piecewise convex relaxations. While such relaxations have been used to
bound medium-sized MINLPs with bilinear functions 
\cite{castro2015tightening,hasan2010piecewise,karuppiah2006global,kolodziej2013discretization,bergamini2008improved}, we generalize these approaches to arbitrary polynomial functions.} 

\kaarthik{ 
Our second contribution turns the derivation of these relaxations into a search procedure based on solving MIPs, i.e., a `MIP-based approach' that is akin to the approach of \cite{ruiz2017global,faria2012new}. 
%
Most existing approaches rely on sBB. In a conventional sBB algorithm, 
branching occurs on the domain of one variable at a time. The branching generates two new problems (child nodes), each with a smaller domain than the parent problem (node) and potentially tighter relaxations. Whenever the best possible solution at a node is worse than the best known feasible solution, the node is pruned. sBB is typically combined with 
with enhancements such as cutting planes and domain reduction techniques to further improve the efficiency of the search \cite{sahinidis1996baron,tawarmalani2005polyhedral}.
In contrast, our MIP-based approach solves a sequence of MIPs based on successively tighter piecewise convex relaxations that converges to the optimal solution.
} 

\kaarthik{
Our third contribution is a sparse domain partitioning approach for piecewise convex relaxations. Most existing partitioning approaches rely on uniform partitioning, i.e., \cite{hasan2010piecewise}.
Unfortunately, uniform partitioning, when used in conjunction with a MIP-based approach can lead to MIPs with a large number of binary variables.
Thus, uniform partitioning limits MIP based approaches
to small- and medium-sized problems. This important issue has motivated the development of piecewise relaxation techniques where the number of binary variables increases logarithmically \cite{misener2011apogee,Vielma2009} with the number of partitions and multiparametric disaggregation approaches \cite{castro2015normalized}. 
In other work \cite{wicaksono2008piecewise}, the authors present a non-uniform, bivariate partitioning approach that improves the relaxations but provide results for a single, simple benchmark problem. Reference \cite{teles2013univariate} discusses a univariate parametrization method applied to medium-sized benchmarks. However, none of these approaches address the key limitation of uniform partitioning, \emph{partition density}, i.e. these methods introduce partitions in unproductive areas of the variable domains. We address this limitation by introducing a novel approach that adaptively  partitions the relaxations in regions of the search space that favor optimality. To the best of our knowledge, this is the first work in the literature that develops a complete MIP-based method for solving MINLPs to global optimality based on sparse domain partitioning schemes.}

\kaarthik{Our fourth (minor) contribution combines the adaptively partitioned piecewise relaxation approach with sequential, optimization-based bound-tightening (OBBT). OBBT is used as a presolve step in the overall global optimization algorithm. 
OBBT solves a sequence of convex minimization and maximization problems on the variables that appear in nonconvex terms. The solutions to these problems tighten domains of the variables and the associated relaxation to the nonconvex terms \cite{puranik2017domain,belotti2012feasibility,faria2011novel,mouret2009tightening}. 
Recent work has observed the effectiveness of applying OBBT in various applications \cite{coffrin2015strengthening,fei2017acc,nagarajan2017r2r}. We adapt and extend this approach by solving \textit{convex MIPs} in the OBBT procedure (existing approaches solve ordinary convex problems). Though this approach seems counter-intuitive, computational experiments indicate that the value of the strengthened bounds obtained by solving MIPs often outweigh the computational time required to solve them.}

\kaarthik{Finally, these four contributions are combined into a MIP-based global optimization algorithm which is referred to as the Adaptive, Multivariate Partitioning (AMP) algorithm. Given an MINLP, AMP first calculates a local solution to the MINLP, an initial lower bound, and tightened variable bounds (sequential OBBT) as a presolve step. 
The main loop of the AMP algorithm refines the partitions of the variable domain, computes improved lower bounds, and derives better local (upper bound) solutions. The variable domains are refined in a non-uniform and adaptive fashion. In particular, partitions are dynamically added around the optimal solution to the relaxed problem at each iteration of AMP. This loop iterates until the relative gap between the lower bound and the upper bound solution meets a user specified global optimality tolerance. The computation may also be interrupted early to provide a local optimal solution.}

A preliminary version of this work \cite{nagarajan2016tightening} was  applied to hard, infrastructure network optimization problems  \cite{fei2017acc,LuPSCC2017}, which demonstrated the effectiveness of adaptive partitioning strategies. Given the efficacy of the proposed ideas, including various enhancements (not discussed in this paper), the algorithm's implementation is also available as an open-source solver in Julia programming language \cite{bent2017polyhedral}. 
The remainder of this paper is organized as follows: Section \ref{sec:problem} discusses the required notation, problem set-up, and reviews standard convex relaxations. Section \ref{sec:algo} discusses our Adaptive, Multivariate Partitioning Algorithm to solve MINLPs to global optimality with a few proofs of convergence guarantees. Section \ref{sec:results} illustrates the strength of the algorithms on benchmark MINLPs and Section \ref{sec:conclusions} concludes the paper.

\section{Definitions}
\label{sec:problem}
\newcommand{\calP}{\boldsymbol{\mathcal{P}}}
\newcommand{\calI}{\boldsymbol{\mathcal{I}}}

\paragraph{Notation} Here, we use lower and upper case for vector and matrix entries, respectively. Bold font refers to the entire vector or matrix.
With this notation, $||\bm{v}||_{\infty}$ defines the $\ell^{\infty}$ norm of vector $\bm{v}\in\mathbb{R}^n$. Given vectors $\bm{v}_1 \in\mathbb{R}^n$ and $\bm{v}_2 \in\mathbb{R}^n$, $\bm{v}_1 \cdot \bm{v}_2 = \sum_{i=1}^n {v_1}_i {v_2}_i$; $\bm{v}_1 + \bm{v}_2$ implies element-wise sums; and $\frac{\bm{v}_1}{\alpha}$ denotes the element-wise ratio between entries of $\bm{v}_1$ and the scalar $\alpha$. Next, $z \in \mathbb{Z}$ represents an integer (variable/constant) and specifically $z \in \mathbb{B}$ represents a binary variable. Finally, we let $\bm e_i$ denote a unit vector whose $i$\textsuperscript{th} coordinate is one. 

\paragraph{Problem} The problems considered in this paper are MINLPs with polynomials which have at least one feasible solution. 
The general form of the problem, denoted as $\calP$, is as follows: 

\begin{equation*}
\begin{aligned}
\calP: \ \ \ & \underset{\bm{x},\bm{y}}{\text{minimize}} & &  f(\bm{x},\bm{y}) \\
&\text{subject to} & & \bm{g}(\bm{x},\bm{y}) \leqslant 0, \\
& & & \bm{h}(\bm{x},\bm{y}) = 0, \\
& & & \bm{x}^L \leqslant \bm{x} \leqslant \bm{x}^U, \\
& & & \bm{y} \in \{0,1\}^m
\end{aligned}
\end{equation*}

\noindent where, $f:\mathbb{R}^n\times \mathbb{B}^m \rightarrow \mathbb{R}$, $g_i :\mathbb{R}^n\times \mathbb{B}^m \rightarrow \mathbb{R} \ \mathrm{for} \ i=1,\ldots,G$ and $h_i :\mathbb{R}^n\times \mathbb{B}^m \rightarrow \mathbb{R}  \ \mathrm{for} \  i=1,\ldots,H$ \kaarthik{are polynomials.} 
For the sake of clarity, neglecting the binary variables in the functions, $f,\bm{g}$ or $\bm{h}$ can assume the following form: 
\begin{align}
    \sum_{t \in T} a_t \prod_{k\in K_t}x^{\alpha_k}_k
    \label{eq:ML_form}
\end{align}

\noindent
where, \kaarthik{$T$ is a set of terms in a polynomial, $K_t$ is a set of variables in term $t$,} $a_t \in \mathbb{R}$ is a \kaarthik{real} coefficient and $\alpha_k$ is an exponent \kaarthik{(integer)} value. 
$\bm{x}$ and $\bm{y}$ are vectors of continuous variables with box constraints [$\bm{x}^L,\bm{x}^U$] and binary variables, respectively. 
$\bm{x}$ and $\bm{y}$ have dimension $n$ and $m$, respectively.
We use notation $\sigma$ to denote a solution to $\calP$, where $\sigma(\cdot)$ is the value of variable(s), $\cdot$, in $\sigma$ and $f(\sigma)$ is the objective value of $\sigma$. We note that $\calP$ is an NP-hard combinatorial problem. \kaarthik{The construction of convex relaxations for each individual term in Eq. \eqref{eq:ML_form} plays a critical role in developing algorithms for solving $\calP$ to global optimality. In the following paragraphs, we discuss the relaxations used in this paper.}

\harsha{In this paper, we use relaxations for bilinear, multilinear and quadratic monomials. Note that, without loss of generality, any polynomial can be equivalently expressed using a combination of these monomials.}

\paragraph{McCormick relaxation of a bilinear term} \kaarthik{For $t\in T$, when $|K_t| \leqslant 2$
and $\alpha_k = 1$, the McCormick relaxation \cite{mccormick1976computability} is used. Given variables $x_i$ and $x_j$ that appear in $t$, McCormick relaxed the set
$$S_B  = \left\{(x_i,x_j,\widehat{x_{ij}}) \in [x_i^L,x_i^U]\times [x_j^L,x_j^U] \times \mathbb{R} \mid \widehat{x_{ij}} = x_ix_j\right\}$$ with the following four inequalities:}

{\fontsize{9}{8}\selectfont
\begin{subequations}
\begin{align}
   \widehat{x_{ij}} &\geqslant x_i^L x_j + x_j^L x_i - x_i^Lx_j^L\\ 
   \widehat{x_{ij}} &\geqslant x_i^U x_j + x_j^U x_i - x_i^Ux_j^U\\ 
   \widehat{x_{ij}} &\leqslant x_i^L x_j + x_j^U x_i - x_i^Lx_j^U\\ 
   \widehat{x_{ij}} &\leqslant x_i^U x_j + x_j^L x_i - x_i^Ux_j^L 
\end{align}
\label{eq:mcc}
\end{subequations}}

\noindent
Let $\langle x_i,x_j\rangle^{MC}\supset S_B$ represent the feasible region defined by \eqref{eq:mcc}. For a single bilinear term $x_ix_j$, the relaxations in \eqref{eq:mcc} describe the convex hull of set $S_B$ \cite{al1983jointly}.

\paragraph{Recursive McCormick relaxation of a multilinear term}
For a general multilinear term ($|K_t| \geqslant 3$, $\alpha_k = 1$), McCormick proposed a recursive approach to successively derive envelopes on bilinear combinations of the terms. The resulting relaxation has formed the basis for the relaxations used in the global optimization literature, including the implementations in BARON, Couenne and SCIP \cite{sahinidis1996baron,belotti2009branching,achterberg2009scip}. More formally, the non-convex function given by $\prod_{k=1}^{|K_t|} x_k$ can be relaxed by introducing lifted variables $\widehat{x}_1,\ldots,\widehat{x}_{|K_t|-1}$ such that $\widehat{x}_1 = x_{1}x_{2}$ and $\widehat{x}_{i} = \widehat{x}_{i-1}x_{i+1}$ for every $i=2,\ldots,|K_t|-1$. Thus, the recursive McCormick envelopes of $\prod_{k=1}^{|K_t|} x_k$ are described by 
\begin{subequations}
\begin{align}
& \left\{(x_1,x_2,\widehat{x}_1) \in [x_1^L,x_1^U]\times [x_2^L,x_2^U] \times [\widehat{x}_1^L,\widehat{x}_1^U] \mid \widehat{x}_1 = \langle x_1,x_2\rangle^{MC}\right\}, \\
& \left\{(\widehat{x}_{i-1},x_{i+1},\widehat{x}_i) \in [\widehat{x}_{i-1}^L,\widehat{x}_{i-1}^U]\times [x_{i+1}^L,x_{i+1}^U] \times [\widehat{x}_i^L,\widehat{x}_i^U]
\mid  \right. \nonumber \\  
& \hspace{3.4cm} \left. \widehat{x}_i = \langle \widehat{x}_{i-1},x_{i+1} \rangle^{MC}\right\}, \quad \forall i=2,\ldots,|K_t|-1.
\end{align}
\label{eq:rec_mcc}
\end{subequations}
\noindent
where, the bounds of $\widehat{x}_i$ variables are derived appropriately. By abuse of notation, \eqref{eq:rec_mcc} can be succinctly represented as
$$\left\langle \prod_{k=1}^{|K_t|} x_k\right\rangle^{MC} = \left\langle\left\langle\left\langle x_1,x_2\right\rangle^{MC},\ldots, x_{|K_t|-1}\right\rangle^{MC},x_{|K_t|} \right\rangle^{MC}.$$ 

\kaarthik{In general, the recursive McCormick envelopes described in \eqref{eq:rec_mcc} for a single multilinear term are not the tightest possible relaxation. The choice of the recursion order affects the tightness of the relaxation \cite{Speakman2017,cafieri2010convex}. However, authors in \cite{ryoo2001analysis} prove that \eqref{eq:rec_mcc} describes the convex hull when the bounds on the variables are in the set $[0,1]$. This result was generalized by \cite{luedtke2012some} for variables with bounds that are either $[0,x^U_i]$ or [$-x^U_i,x^U_i$] (symmetric about the origin). More generally, the convex hull of a multilinear term can be obtained by using an extreme point characterization by using exponential number of variables \cite{Rikun1997}. The computational tractability of using an extreme point characterization for piecewise relaxation of multilinear terms remains a subject of future work.}

\paragraph{Piecewise McCormick relaxation of a bilinear term}
\kaarthik{In the presence of partitions on the variables involved in a multilinear term, the McCormick relaxations (applied on bilinear terms) can be tightened (see Figure \ref{fig:mcc_region}[a]) by using a piecewise convex relaxation which uses one binary variable per variable partition.} Given a bilinear term $x_ix_j$ and partition sets $\calI_i$ and $\calI_j$, binary variables
$\widehat{\bm{y}}_i \in \{0,1\}^{|\calI_i|}$ and $\widehat{\bm{y}}_j \in \{0,1\}^{|\calI_j|}$ 
are used to denote these partitions. Each entry {\color{blue} in} $\calI_i$ is a pair of values, $\langle i,j\rangle$ that model the upper and lower bound of a variable in a partition. We refer to the collection of all partition sets with $\calI$.
These binary variables are used to control the partitions that are active and the associated relaxation of the active partition.
Formally, the piecewise McCormick constraints, 
denoted by $\widehat{x_{ij}} \in \langle x_i,x_j\rangle^{MC(\calI)}$,
take the following form: 
\begin{subequations}
\begin{align}
   &\widehat{x_{ij}} \geqslant (\bm{x}_i^l\cdot\widehat{\bm{y}}_i) x_j + (\bm{x}_j^l\cdot\widehat{\bm{y}}_j) x_i - (\bm{x}_i^l\cdot\widehat{\bm{y}}_i)(\bm{x}_j^l\cdot\widehat{\bm{y}}_j) \label{eq:mcc1}\\ 
   &\widehat{x_{ij}} \geqslant (\bm{x}_i^u\cdot\widehat{\bm{y}}_i) x_j + (\bm{x}_j^u\cdot\widehat{\bm{y}}_j) x_i - (\bm{x}_i^u\cdot\widehat{\bm{y}}_i)(\bm{x}_j^u\cdot\widehat{\bm{y}}_j)\label{eq:mcc2}\\ 
   &\widehat{x_{ij}} \leqslant (\bm{x}_i^l\cdot\widehat{\bm{y}}_i) x_j + (\bm{x}_j^u\cdot\widehat{\bm{y}}_j) x_i - (\bm{x}_i^l\cdot\widehat{\bm{y}}_i)(\bm{x}_j^u\cdot\widehat{\bm{y}}_j) \label{eq:mcc3}\\ 
   &\widehat{x_{ij}} \leqslant (\bm{x}_i^u\cdot\widehat{\bm{y}}_i) x_j + (\bm{x}_j^l\cdot\widehat{\bm{y}}_j) x_i - (\bm{x}_i^u\cdot\widehat{\bm{y}}_i)(\bm{x}_j^l\cdot\widehat{\bm{y}}_j) \label{eq:mcc4}\\
   &\widehat{\bm y}_i \cdot \bm 1 = 1, \ \ \widehat{\bm y}_j \cdot \bm 1 = 1 \\
   &\widehat{\bm{y}}_i \in \{0,1\}^{|\calI_i|}, \quad \widehat{\bm{y}}_j \in \{0,1\}^{|\calI_j|}
\end{align}
\label{eq:tmc}
\end{subequations}

\noindent where, $(\bm{x}_i^l, \bm{x}_i^u) \in \calI_i$ are the vector form of the partition sets of $x_i$ ($\calI_i$) and $\bm 1$ is a vector of ones of appropriate dimension.
Also, $(\bm{x}_i^l\cdot\widehat{\bm{y}}_i)(\bm{x}_j^l\cdot\widehat{\bm{y}}_j)$ is rewritten as  $\bm{x}_i^l(\widehat{\bm{y}}_i\widehat{\bm{y}}_j^T)\bm{x}_j^l$, where $(\widehat{\bm{y}}_i\widehat{\bm{y}}_j^T)$ is a matrix with binary product entries. Note that these binary products and the bilinear terms in $\widehat{\bm{y}}_j x_i$ and $\widehat{\bm{y}}_i x_j$ can be linearized exactly using standard McCormick relaxations \cite{nagarajan2018lego}. 

It is then straightforward to generalize piecewise McCormick relaxations to multilinear terms, and we use the following notation to denote these relaxations

\[
\left\langle \prod_{k=1}^{|K_t|} x_k\right\rangle^{MC(\calI)} = \left\langle\left\langle\left\langle x_1,x_2\right\rangle^{MC(\calI)},\ldots, x_{|K_t|-1}\right\rangle^{MC(\calI)}, x_{|K_t|} \right\rangle^{MC(\calI)}.
\] 

\noindent We also note that the McCormick relaxation is a special case of the piecewise McCormick relaxation when $\calI_i = \{\langle x^L_i,x^U_i\rangle\}$. 

These relaxations can also be encoded using $\log(|\calI_i|)$ binary variables \cite{DAmbrosio2010,Li2013b,Vielma2009} or variations of special order sets (SOS1, SOS2). 
For an ease of exposition, we do not present the details of the log-based formulation in this paper. However, later in the results section, we do compare the effectiveness of SOS1 formulations with respect to the linear representation of binary variables. 

\begin{figure}[htp]
   \centering
   \subfigure[Bilinear term $(x_ix_j)$]{
   \includegraphics[scale=0.305]{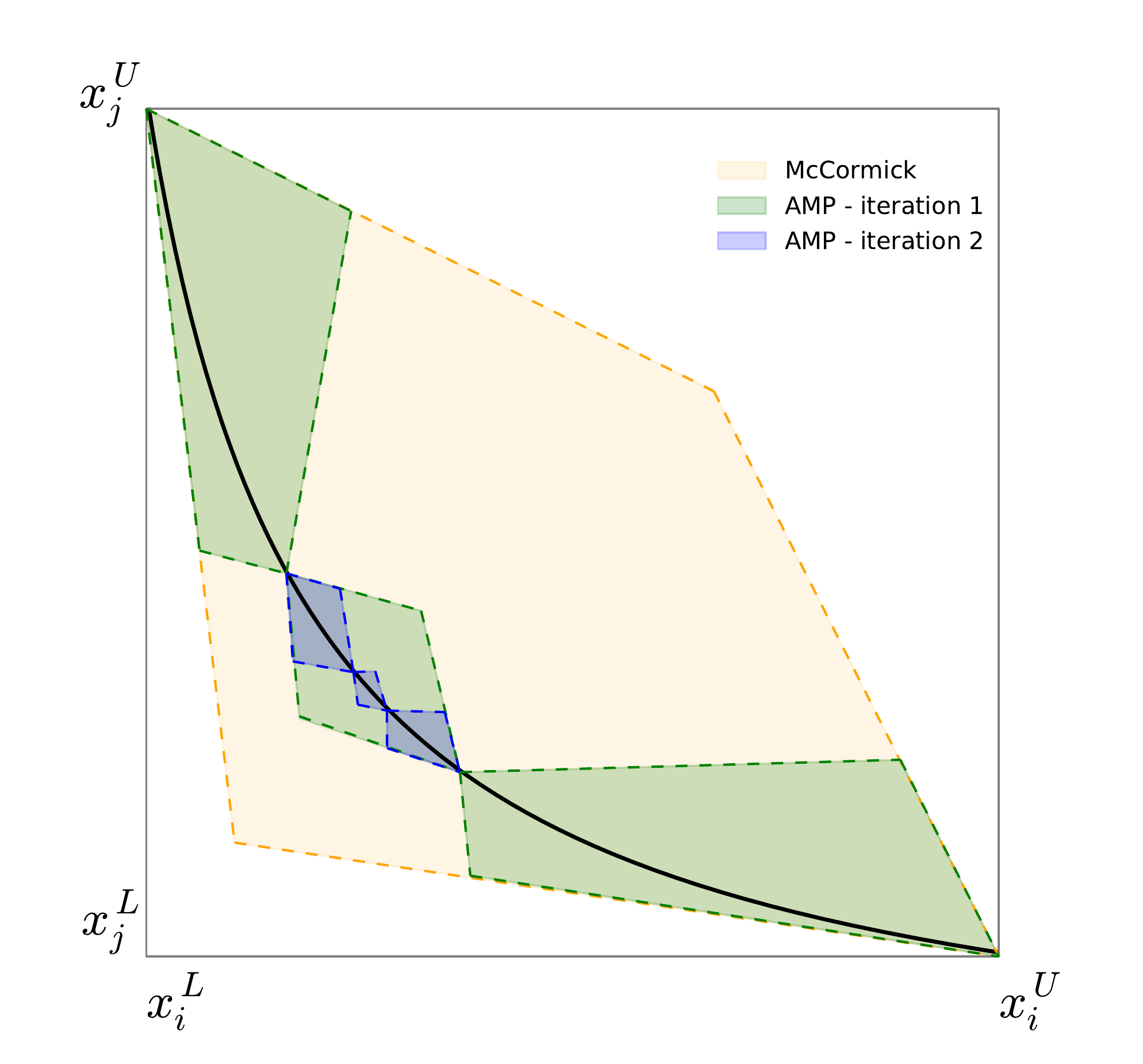}}
   \subfigure[Quadratic term $(x_i^2)$]{
   \includegraphics[scale=0.9]{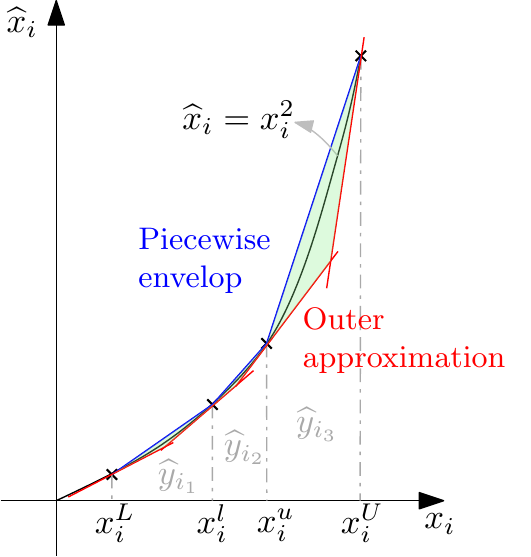}} 
   \caption{Piecewise relaxations (shaded) of bilinear and quadratic terms for a given set of partitions.}
   \label{fig:mcc_region}
\end{figure}

\paragraph{Piecewise relaxation of a quadratic term}
\harsha{Without loss of generality\footnote{In the case of a higher order univariate monomial, i.e., $x_i^5$,  apply a reduction of the form $x_i^2x_i^2x_i \Rightarrow \tilde{x}_i^2x_i \Rightarrow \tilde{\tilde{x}}_ix_i$.}, assume a univariate monomial takes the form $x_i^2$. Though, we restrict our discussion to a univariate monomial, similar extensions hold true for a multivariate monomial by applying  a sequence of relaxations on the respective univariate monomials}. Given partitions in $\calI_i$,
the piecewise, convex relaxation (see Figure \ref{fig:mcc_region}[b]), denoted by $\widehat{x}_i\in \langle x_i\rangle^{MC_q(\calI)}$, takes the form: 
\begin{subequations}
\begin{align}
   &\widehat{x}_i \geqslant x_i^2, \label{eq:quad}\\ 
   &\widehat{x}_i \leqslant  \left((\bm{x}_i^l\cdot\widehat{\bm{y}}_i) + (\bm{x}_i^u\cdot\widehat{\bm{y}}_i)\right) x_i - (\bm{x}_i^l\cdot\widehat{\bm{y}}_i)(\bm{x}_i^u\cdot\widehat{\bm{y}}_i)\\ 
   &\widehat{\bm y}_i \cdot \bm 1 = 1 \\
   &\widehat{\bm{y}}_i \in \{0,1\}^{|\calI_i|}
\end{align}
\label{eq:qc}
\end{subequations}

\noindent
Once again, $(\bm{x}_i^l\cdot\widehat{\bm{y}}_i)(\bm{x}_i^u\cdot\widehat{\bm{y}}_i)$ is rewritten as  $\bm{x}_i^l(\widehat{\bm{y}}_i\widehat{\bm{y}}_i^T)\bm{x}_i^u$, where $\widehat{\bm{y}}_i\widehat{\bm{y}}_i^T$ is a symmetric matrix with binary product entries (squared binaries on diagonal). Hence, it is sufficient to linearize the entries of the upper triangular matrix with exact representations. We also note again that the unpartitioned relaxation is a special case where $\calI_i = \{ \langle x^L_i,x^U_i \rangle\}$.

\begin{lemma}
\kaarthik{$\langle x_i\rangle^{MC_q(\calI)} \subset \langle x_i,x_i\rangle^{MC(\calI)} $.}
\end{lemma}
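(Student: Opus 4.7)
The plan is to fix any assignment of the binary vector $\widehat{\bm y}_i$, which selects a single partition $[\bar{x}^l,\bar{x}^u] \in \calI_i$ via $\bar{x}^l = \bm{x}_i^l\cdot\widehat{\bm{y}}_i$ and $\bar{x}^u = \bm{x}_i^u\cdot\widehat{\bm{y}}_i$, and then compare the projection of both relaxations onto the $(x_i,\widehat{x}_i)$ plane. I would first observe that the single upper-bound inequality defining $\langle x_i\rangle^{MC_q(\calI)}$ coincides with the two upper-bound McCormick inequalities obtained from specializing \eqref{eq:mcc3}--\eqref{eq:mcc4} with $x_j:=x_i$, since setting $x_j^{l}\cdot\widehat{\bm y}_j = \bar{x}^l$ and $x_j^{u}\cdot\widehat{\bm y}_j = \bar{x}^u$ makes \eqref{eq:mcc3} and \eqref{eq:mcc4} identical and equal to the quadratic upper bound. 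So the two sets differ only in their lower bounds.

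Next, I would show containment by proving that $\widehat{x}_i \geqslant x_i^2$ implies both McCormick lower bounds specialized to $x_j:=x_i$, namely
\begin{align*}
\widehat{x}_i &\geqslant 2\bar{x}^l x_i - (\bar{x}^l)^2, \\
\widehat{x}_i &\geqslant 2\bar{x}^u x_i - (\bar{x}^u)^2.
\end{align*}
This is immediate because each right-hand side is a tangent line to the convex function $t\mapsto t^2$ at $t=\bar{x}^l$ and $t=\bar{x}^u$ respectively, and hence is a global underestimator: $2\bar{x}^l x_i - (\bar{x}^l)^2 \leqslant x_i^2$ and $2\bar{x}^u x_i - (\bar{x}^u)^2 \leqslant x_i^2$ for every $x_i\in\mathbb{R}$. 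Combined with the coincidence of upper-bound inequalities, this yields $\langle x_i\rangle^{MC_q(\calI)} \subseteq \langle x_i,x_i\rangle^{MC(\calI)}$.

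For strict inclusion, I would construct an explicit witness inside the active partition. Pick $x_i = (\bar{x}^l+\bar{x}^u)/2$, the unique point where the two tangent lines intersect, and set $\widehat{x}_i$ equal to that common value $\bar{x}^l\bar{x}^u$. This $(x_i,\widehat{x}_i)$ satisfies both McCormick lower bounds with equality and the upper bound strictly, so it lies in $\langle x_i,x_i\rangle^{MC(\calI)}$. However, since $t\mapsto t^2$ is strictly convex, $\bar{x}^l\bar{x}^u < \bigl((\bar{x}^l+\bar{x}^u)/2\bigr)^2$ whenever $\bar{x}^l < \bar{x}^u$, so the constraint $\widehat{x}_i \geqslant x_i^2$ is violated and the point is not in $\langle x_i\rangle^{MC_q(\calI)}$.

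I do not expect any real obstacle here; the only subtlety is handling the binary variables uniformly. Because both relaxations share the same partition-selection constraints $\widehat{\bm y}_i\cdot\bm 1 = 1$ with $\widehat{\bm y}_i\in\{0,1\}^{|\calI_i|}$, the comparison can be done partition-by-partition as above, and the containment and separating point lift back to the full mixed-integer sets without modification. A mild standing assumption is that at least one partition in $\calI_i$ is non-degenerate ($\bar{x}^l<\bar{x}^u$); otherwise both sets collapse to the same isolated point and the inclusion is only non-strict.
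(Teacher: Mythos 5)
Your proposal is correct and follows essentially the same route as the paper's proof: the containment comes from recognizing the McCormick lower bounds as tangent-line outer approximations of $\widehat{x}_i \geqslant x_i^2$ at the partition endpoints, and strictness is witnessed by the same point $x_i = (\bar{x}^l+\bar{x}^u)/2$, $\widehat{x}_i = \bar{x}^l\bar{x}^u$ used in the paper. Your explicit check that the two upper-bound inequalities coincide, and your remark about degenerate partitions, are slightly more careful than the paper's write-up but do not change the argument.
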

\begin{proof}
\kaarthik{
Given $\calI_i$ for variable $x_i$, $\langle x_i,x_i\rangle^{MC(\calI)}$ is given by the following constraints:
\begin{subequations}
\begin{align}
\label{eq:pf1}
    &\widehat{x}_i \geqslant  2(\bm{x}_i^l\cdot\widehat{\bm{y}}_i) x_i - (\bm{x}_i^l\cdot\widehat{\bm{y}}_i)^2\\
    \label{eq:pf2}
    &\widehat{x}_i \geqslant  2(\bm{x}_i^u\cdot\widehat{\bm{y}}_i) x_i - (\bm{x}_i^u\cdot\widehat{\bm{y}}_i)^2\\ \label{eq:pf3}
   &\widehat{x_i} \leqslant  \left((\bm{x}_i^l\cdot\widehat{\bm{y}}_i) + (\bm{x}_i^u\cdot\widehat{\bm{y}}_i)\right) x_i - (\bm{x}_i^l\cdot\widehat{\bm{y}}_i)(\bm{x}_i^u\cdot\widehat{\bm{y}}_i) \\ 
   &\widehat{\bm y}_i \cdot \bm 1 = 1, \, \widehat{\bm{y}}_i \in \{0,1\}^{|\calI_i|}
\end{align}
\label{eq:dtmc_proof}
\end{subequations}
\noindent
First, we claim that any point in $\langle x_i\rangle^{MC_q(\calI)}$ also lies in $\langle x_i,x_i\rangle^{MC(\calI)}$. 
This is trivial to observe since Eqs. \eqref{eq:pf1} and \eqref{eq:pf2} are outer approximations of Eq. \eqref{eq:quad} at the partition points. 
To prove $\langle x_i\rangle^{MC_q(\calI)}$ is a strict subset of $\langle x_i,x_i\rangle^{MC(\calI)}$, 
we need to produce a point in $\langle x_i,x_i\rangle^{MC(\calI)}$ 
that is not satisfied by $\langle x_i\rangle^{MC_q(\calI)}$.} 

\kaarthik{
Consider the family of points $$ x_i = \frac 12 \left(\bm x_i^l \cdot \bm e_j + \bm x_i^u \cdot \bm e_j\right), ~~\widehat{x}_i = \left( \bm x_i^l \cdot \bm e_j \right)  \left(\bm x_i^u \cdot \bm e_j\right) \quad \forall j \in 1,\dots, |\calI|$$ 
where, $\bm e_j$ is a unit vector whose $j$\textsuperscript{th} component takes a value $1$. This family of points is satisfied by 
$\langle x_i,x_i\rangle^{MC(\calI)}$ 
and are not contained in 
$\langle x_i\rangle^{MC_q(\calI)}$, completing the proof.}
\end{proof}

Given these definitions, we use $\calP^{\calI}$ to denote the piecewise relaxation of $\calP$ for a given $\calI$, where all the nonlinear monomial terms are replaced with their respective piecewise convex relaxations. More formally,
\begin{equation}
\begin{aligned}
\calP^{\calI}: \ \ \ & \underset{\bm{x},\bm{y}}{\text{minimize}} & &  f^{\calI}(\bm{x},\bm{y}) \\
&\text{subject to} & & \bm{g}^{\calI}(\bm{x},\bm{y}) \leqslant 0, \\
& & & \bm{h}^{\calI}(\bm{x},\bm{y}) \leqslant 0, \\
& & &  \bm{x}^l_i \cdot \bm{\widehat{y}}_i \leqslant x_i \leqslant  \bm{x}^u_i \cdot \bm{\widehat{y}}_i, \; \; \; \forall \; i = 1 \ldots n\\
& & & \bm{y,\widehat{y}} \in \{0,1\}
\end{aligned}
\end{equation}
where, $f^{\calI}, \bm{g}^{\calI}$ and $\bm{h}^{\calI}$ inherit the above defined piecewise relaxations should the functions be nonlinear. 
Also, we let $f^{\calI}(\sigma)$ denote the objective value of a feasible solution, $\sigma$, to $\calP^{\calI}$.

\section{Adaptive Multivariate Partitioning Algorithm}
\label{sec:algo}

This section details the Adaptive Multivariate Partitioning (AMP) algorithm to compute global\footnote{global optimum is defined numerically by a tolerance, $\epsilon$.} optimal solutions to MINLPs. 

The effectiveness of AMP stems from the observation that the local optimal solutions found by the local solvers are often global optimum or are very close to the global optimum solution on standard benchmark instances. This observation was also made in the literature for the optimal power flow problem in power grids \cite{hijazi2016,kocuk2016strong,LuPSCC2017}.
AMP exploits this structure and adds sparse, spatial partitions to the variable domains around the local optimal solution. \harsha{It is important to note that though the partitions are dynamically added around the local optimal point (in the initial iterations), the AMP algorithm \textit{does not} discount the fact that the global optimal solution can potentially lie in sparser regions and will eventually partition the domains which contain the global optimum.}

\kaarthik{A flow-chart informally describing the steps of AMP and a formal pseudo-code for AMP are given in Figure \ref{fig:flowchart} and Algorithm \ref{alg:amp}, respectively.}
\begin{figure}
   \centering
   \includegraphics[scale=0.682]{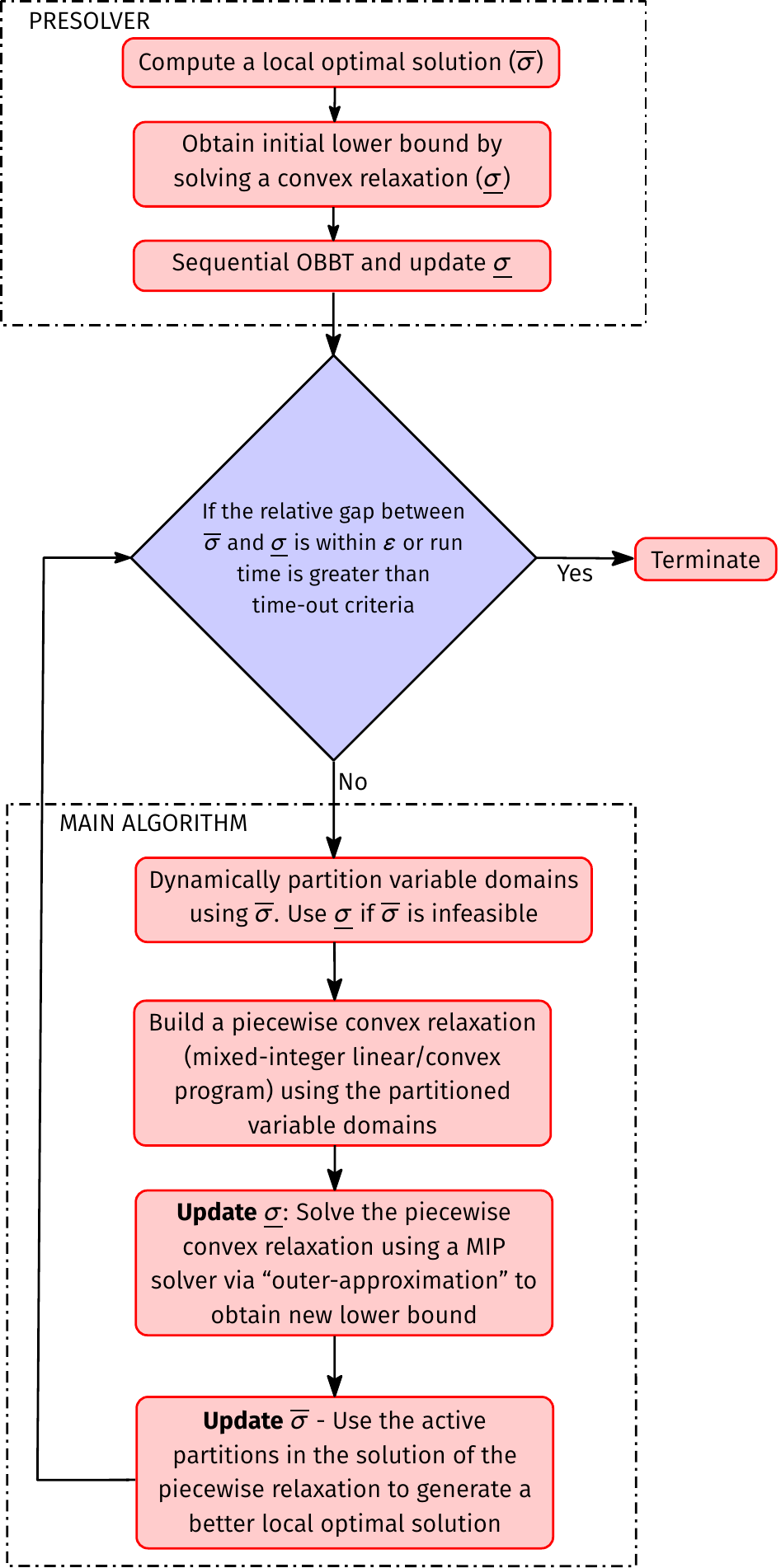}
   \caption{\harsha{Flow-chart describing the overall structure of AMP. The flow chart assumes that the MINLP is a feasible minimization problem.}}
   \label{fig:flowchart}
\end{figure}
\kaarthik{AMP consists of two main components. The first component is a presolve (see lines \ref{amp:ub} -- \ref{amp:lb}). The presolve component of the algorithm is sub-divided into four parts: (i) computing an initial feasible solution, $\overline{\sigma}$, (line \ref{amp:ub}), (ii) creating an initial set of partitions, $\calI$, 
(iii) sequential OBBT (line \ref{amp:tightenbounds}), and (iv) computing an initial lower bound, $\underline{\sigma}$, using the relaxations detailed in Sec. \ref{sec:problem} (line \ref{amp:lb}). }

\kaarthik{The second component of AMP is the main loop (lines \ref{amp:while1}--\ref{amp:while2}) that updates the upper bound, $\overline{\sigma}$, and the lower bound, $\underline{\sigma}$, of $\calP$, until either of the following conditions are satisfied: the bounds are within $\epsilon$ or the computation time exceeds the limit. At each iteration of the main loop, the partitions are refined and the corresponding piecewise convex relaxation is solved to obtain a lower bound (lines \ref{amp:partition} and \ref{amp:lb2}). Similarly the upper bound is obtained using a local solver and updated if it improves the best upper bound computed thus far (lines \ref{amp:ub2} and \ref{amp:ub3}). In the following sections, we discuss each step of the algorithm in detail.
 }

\begin{algorithm}
	\caption{Global optimization using AMP algorithm}
	\label{alg:amp}
	\begin{algorithmic}[1]
	   \Input{$\calP$}
	    \State $\overline{\sigma} \gets$ \Call{Solve}{$\calP$} \label{amp:ub} \Comment{Compute local optimal solution}
	    \State $\calI \gets $ \Call{InitializePartitions}{$\calP,\overline{\sigma}$} \label{amp:initialpartition} \Comment{Initialize variable partitions}
	    \State $\bm{x}^l,\bm{x}^u \gets$ \Call{TightenBounds}{$\calP^{\calI},\overline{\sigma}$} \label{amp:tightenbounds} \Comment{Sequential OBBT}
	    \State $\underline{\sigma} \gets$ \Call{Solve}{$\calP^{\calI}$} \label{amp:lb} \Comment{Initial lower bound computation}
	    \While{$\left(\frac{f(\overline{\sigma}) - f^{\calI}(\underline{\sigma})}{f^{\calI}(\underline{\sigma})} > \epsilon\right) \ \mathrm{and} \  \left(\text{Time} < \text{TimeOut}\right)$}  \label{amp:while1}
	        \State $\calI \gets $ \Call{RefinePartitions}{$\calP^{\calI},\underline{\sigma}$} \label{amp:partition} \Comment{Adaptive partition refinement}
	        \State $\underline{\sigma} \gets$ \Call{Solve}{$\calP^{\calI}$} \label{amp:lb2} \Comment{Compute new lower bound}
	        \State $\widehat{\sigma} \gets$ \Call{Solve}{$\calP, \underline{\sigma}$} \label{amp:ub2} \Comment{Compute new local optimum}
	        \State $\overline{\sigma} \gets \arg\min_{\sigma \in \overline{\sigma} \cup \widehat{\sigma}} f(\sigma)$ \label{amp:ub3} \Comment{Update upper bound}
	    \EndWhile \label{amp:while2}
	    \Output{$\underline{\sigma},\overline{\sigma}$} \label{amp:return}
	\end{algorithmic}
\end{algorithm}


\kaarthik{\subsection{Presolve} \label{subsec:presolve}
The first step of the presolver of AMP is to compute a local optimal solution, $\overline{\sigma}$, to the MINLP (line \ref{amp:ub} of Algorithm \ref{alg:amp}) . This is done using off-the-shelf, open-source solvers that use primal-dual interior point methods in conjunction with a branch-and-bound search tree to handle integer variables. This local solution, $\overline{\sigma}$, is further used to initialize the partitions, $\calI$ (line \ref{amp:initialpartition}). When the local solver reports infeasibility, we set the initial value of $f(\overline{\sigma}) = \infty$ (line \ref{amp:while1}) and use the solution obtained by solving the unpartitioned convex relaxation of $\calP$ to initialize $\calI$. In the subsequent sections we detail the partition initialization schemes and the sequential OBBT algorithm in lines \ref{amp:initialpartition} and \ref{amp:tightenbounds} of Algorithm \ref{alg:amp}.}

\subsubsection{Partition Initialization Scheme and Sequential OBBT} 
\label{subsubsec:partition_initialization}
\kaarthik{This section details the algorithm used in lines \ref{amp:initialpartition} and \ref{amp:tightenbounds} of AMP's presolve \emph{i.e.} \Call{InitializePartitions}{$\calP,\overline{\sigma}$} and \Call{TightenBounds}{$\calP^{\calI},\overline{\sigma}$}, respectively. The sequential OBBT procedure implemented in these functions is one of the key features of AMP. In many engineering applications there is little or no information about the lower and upper bounds ($\bm{x}^L,\bm{x}^U$) of the decision variables in the problem. Even when known, the gap between the bounds is often large and weaken relaxations. In practice, replacing the original bounds with tighter bounds can (sometimes) dramatically improve the quality of these relaxations. The basic idea of OBBT is the derivation of (new) valid bounds to improve the relaxations.
Though, OBBT is a well-known procedure used in global optimization, the key difference is that we apply OBBT sequentially by using \emph{mixed-integer models to tighten the bounds}.}

 \begin{algorithm}[h!]
 \caption{Partition Initialization Scheme}
 \label{alg:partition}
 \begin{algorithmic}[1]
     \Function{InitializePartitions}{$\calP, \overline{\sigma}$}
         \For{$i \in 1 \ldots n$}
                 \State $\calI_i \gets \{\langle x_i^L, x_i^U \rangle\}$ \label{init_partitions}        
             \EndFor
         \If {Bound-tightening without partitions}
             \State \Return $\calI$\label{part_bt}
         \Else \Comment{Bound-tightening with partitions}
                 \State \Return \Call{RefinePartitions}{$\calP^{\calI}, \overline{\sigma}$} \label{part_pbt}
         \EndIf
     \EndFunction
 \end{algorithmic}	
 \end{algorithm}


\kaarthik{For the sequential OBBT algorithm we present two procedures. First, OBBT without partitions (BT), which is
equivalent to partitioning with a single partition.
Second, partition-based OBBT (PBT) which uses the standard
partitioning approach described in Section \ref{subsubsec:adaptive}. For simplicity, we drop the term `optimization-based (OB)' in the acronym. The BT uses convex optimization problems to tighten the bounds, while the PBT uses convex MIPs to tighten the bounds. The two procedures differ in the initial set of partitions, $\calI$, used in the bound-tightening process. Hence, we first present two partition initialization schemes, one for the BT and another for the PBT, respectively. For bound-tightening without partitions, as the name suggests, the partition initialization scheme does not partition the variable domains \emph{i.e.}, $\calI_i \gets \{\langle x_i^L, x_i^U \rangle\}$ for every $i=1,\dots,n$ (line \ref{part_bt} of Algorithm \ref{alg:partition}). 
In the case of PBT, the partition initialization scheme initializes three partitions around the local optimal solution, $\overline{\sigma}$ with a user parameter $\Delta > 1$ (line \ref{part_pbt} of Algorithm \ref{alg:partition} and Section \ref{subsubsec:adaptive}). 
An illustration of the partitions added to a variable $x \in [x^L, x^U]$, whose value at the local optimal solution is $\overline{\sigma}(x)$, is shown in the Figure \ref{fig:pbt}.
If no local solution is obtained in line \ref{amp:ub} of Algorithm \ref{alg:amp}, the solution obtained by solving the unpartitioned convex relaxation of $\calP$ is used in place of $\overline{\sigma}(x)$.}\footnote{To keep the algorithm notation simple, this detail is omitted from Algorithm \ref{alg:partition}.}
\begin{figure}
    \centering
    \includegraphics{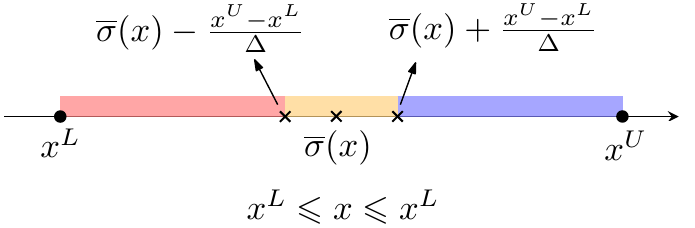}
    \caption{Partition initialization scheme for PBT.}
    \label{fig:pbt}
\end{figure}

\kaarthik{Once the initial set of partitions is computed, the sequential OBBT procedure iteratively computes new bounds by solving a modified version of $\calP^{\calI}$ (Algorithm \ref{Algo:bound}). Each iteration of the sequential OBBT algorithm proceeds as follows: for each continuous variable $x_{i}$ in $\calP$ appearing in the nonconvex terms, two problems $\calP^{\calI}_l$ and $\calP^{\calI}_u$ are solved, where $x_{i}$ is minimized and maximized, respectively, i.e.,}

\begin{subequations}
\begin{eqnarray}
\calP^{\calI}_l, \calP^{\calI}_u: \ \ \  \underset{\bm{x},\bm{y},\widehat{\bm{y}}}{\text{minimize}} & & \Mypm \,x_i \\
\text{subject to} & & 
f^{\calI}(\bm{x},\bm{y}) \le f(\overline{\sigma}) \label{tight1} \\
& & \bm{g}^{\calI}(\bm{x},\bm{y}) \leq 0, \\
& &  \bm{h}^{\calI}(\bm{x},\bm{y}) = 0, \\
& &  \bm{x}^l_i \cdot \widehat{\bm{y}}_i \leq x_i \leq  \bm{x}^u_i \cdot \widehat{\bm{y}}_i, \;\;\; \forall \; i=1\ldots n\\
& &  \bm{y},\widehat{\bm{y}} \in \{0,1\}  \label{t:1}
\end{eqnarray}
\label{eq:sbt}
\end{subequations}


\noindent
\kaarthik{where $\Mypm \,x_i$ in formulation \eqref{eq:sbt} denotes two optimization problems, where $x_i$ and $-x_i$ are individually minimized (lines \ref{bound:lb}-\ref{bound:ub} of Algorithm \ref{Algo:bound}). 
In both cases, a constraint that bounds the original objective function of $\calP^{\calI}$ with a best known feasible solution $\overline{\sigma}$ (equation \eqref{tight1}) is added when an initial feasible solution is available. Inclusion of this constraint on the objective is referred to as optimality/optimization-based bound-tightening in the literature. 
The OBBT algorithm stops when cumulative bound improvement between successive iterations, measured in terms of the $\ell^{\infty}$ norm, falls below specified tolerance values (line \ref{bound:while1} of Algorithm \ref{Algo:bound}).
We note that the sequential OBBT algorithm is naturally parallelizable as all the optimization problems are independently solvable.}

 \begin{algorithm}[h!]
 \caption{Sequential bound-tightening of $\bm{x}$}
 \label{Algo:bound}
 \begin{algorithmic}[1]
 	    \Function{TightenBounds}{$\calP^{\calI}, \overline{\sigma}$}
             \State $\widehat{\bm{x}}^l = \widehat{\bm{x}}^u \gets \bm{0}$  
             \While{$||\bm{x}^l - \widehat{\bm{x}}^l||_{\infty} > \epsilon^l$ and $||\bm{\bm{x}}^u - \widehat{\bm{x}}^u||_{\infty} > \epsilon^u$} \label{bound:while1}
                 \State $\widehat{\bm{x}}^l \gets  \bm{x}^l, \ \widehat{\bm{x}}^u \gets \bm{x}^u$
                 \For{$i=1,\ldots,n$}  \label{bound:for1}
                     \State $\sigma_i^l \gets $ \Call{Solve}{$\calP^{\calI}_l$} \label{bound:lb}
                     \State $\sigma_i^u \gets $ \Call{Solve}{$\calP^{\calI}_u$} \label{bound:ub}
                     \State $\bm{x_i}^l \gets \max(\sigma_i^l(x_i), \bm{x_i}^l)$, $\bm{x_i}^u \gets \min(\sigma_i^u(x_i), \bm{x_i}^u)$ \label{bound:new}
                 \EndFor \label{bound:for2}
             \EndWhile \label{bound:while2}
 \State \Return $\bm{\bm{x}}^l,\bm{\bm{x}}^u$.
 	    \EndFunction
 \end{algorithmic}	
 \end{algorithm}

\begin{remark}
OBBT procedures, usually referred to as domain reduction techniques, are well known in the global optimization literature \cite{puranik2017domain,horst2013handbook}. Our algorithm generalizes these techniques by performing optimality-based bound-tightening iteratively based on MIP-formulations (with piecewise convex relaxations) until convergence to a fixed point is achieved.
\end{remark} 

\subsection{Main Algorithm} \label{subsec:main}
\kaarthik{The main loop of AMP, shown in lines \ref{amp:while1} -- \ref{amp:while2} in Algorithm \ref{alg:amp} and main algorithm block of the flow chart in Figure \ref{fig:flowchart}, performs three main operations: 
\begin{enumerate}
\item First, the variable domains are refined adaptively, in a non-uniform manner (line \ref{amp:partition} of Algorithm \ref{alg:amp}).  
\item A piecewise convex relaxation is constructed using the partitioned domain, which is solved via outer-approximation to obtain an updated lower bound (line \ref{amp:lb2} of Algorithm \ref{alg:amp})
\item A local solve of the original, MINLP $\calP$, with the variables bounds restricted to the partitions obtained by the lower bounding solution, is performed to obtain a new local optimal solution. If this solution is better than the current incumbent, then the incumbent is updated (lines \ref{amp:ub2} and \ref{amp:ub3} of Algorithm \ref{alg:amp}). 
\end{enumerate}
In the following subsections, we detail each of the three operations involved in the main loop of AMP. }

\kaarthik{\subsubsection{Variable Domain Partitioning} 
\label{subsubsec:adaptive}
One of the core contributions of the AMP algorithm is the adaptive and non-uniform variable partitioning scheme. This part of the algorithm 
determines how variables in nonconvex terms of the original MINLP are partitioned. Existing approaches partition each variable domain uniformly into a finite number of partitions and the number of partitions increases with the number of iterations \cite{grossmann2013systematic,bergamini2008improved,castro2015tightening,hasan2010piecewise}. While this is a straight-forward approach for partitioning the variable domains, it potentially creates huge number of partitions far away from the global optimal value of each variable, \emph{i.e.}, many of the partitions are not useful. Though there have been methods, including sophisticated bound propagation techniques and logarithmic encoding to alleviate this issue, they do not scale to large-scale MINLPs. Instead, our approach successively tightens the relaxations with sparse, non-uniform partitions. This approach focuses partitioning on regions of the variable domain that appear to influence optimality the most. These regions are defined by a solution, $\sigma$, that is typically the lower bound solution at the current iteration.}

\kaarthik{The adaptive domain partitioning algorithm of AMP refines the variable partitions at a given iteration with a user parameter $\Delta > 1$. $\Delta$ is used to control the size and number of the partitions and influences the rate of convergence of the overall algorithm. The algorithm is similar to the algorithm used for initializing the partitions in Section \ref{subsubsec:partition_initialization} (see Figure \ref{fig:pbt}). It differs by using the lower bound solution obtained by solving the piecewise relaxations to refine the partition. Again, for the sake of clarity, Figure \ref{fig:partitions} illustrates the bivariate partition refinement for a bilinear term produced by the adaptive domain partitioning algorithm at successive iterations of the main algorithm. Figure \ref{fig:mcc_region}(a) geometrically illustrates the tightening of piecewise convex envelopes induced by the adaptive partitioning scheme. The pseudo-code of the domain partitioning algorithm is given in Algorithm \ref{Algo:partition}. The algorithm takes the current variable partitions and a lower bound solution as input and outputs a refined set of partitions for each variable. It first identifies, in line \ref{partition:variable} of Algorithm \ref{Algo:partition}, the partition where the lower bound solution is located (active partition) and splits that partition into three new partitions, whose sizes are defined by $\Delta$ and the size of the active partition (lines \ref{gamma1}-\ref{partition:split} in Algorithm \ref{Algo:partition}).}

\harsha{\paragraph{Exhaustiveness of variable domain partitioning scheme} 
Exhaustiveness of a partitioning scheme is one of the important requirements for any global optimization algorithm \cite{horst2013global}. The exhaustiveness of the adaptive domain partitioning scheme is built into AMP through the lines \ref{partition:conv1} -- \ref{partition:conv2} in Algorithm \ref{Algo:partition}. These conditions ensure that the AMP algorithm does not get stuck at a particular region of the variable. Instead, it guarantees that the largest partition outside the active partition is further refined when the active partition's width is less than a partition-width tolerance value, $\epsilon^p$. Here, the largest inactive partition of a given variable is analogous to the largest unexplored domain for that variable. Thus, the partitioning scheme in the AMP algorithm satisfies the desirable exhaustiveness property.}

\begin{algorithm}[htp]
\caption{Variable Domain Partitioning}
\label{Algo:partition}
\begin{algorithmic}[1]
    \Function{RefinePartitions}{$\calP^{\calI}, \sigma$}
        \For{$i \in 1 \ldots n$} 
            \State $k \gets \arg\max \sigma(\widehat{y}^k_i)$ \label{partition:variable} \Comment{Identifying the active partition for variable $i$}
            \State $\langle l_i, u_i \rangle \gets \calI_i^k$ 
            \State $\xi_i \gets \frac{u_i-l_i}{\Delta}$ \label{partition:size}
            \If{$\xi_i > \epsilon^p$} \Comment{Active partition is split into three partitions}
                \State $\gamma_1 \gets \min(l_i,\max(\sigma(x_i) - \xi_i, x_i^L))$\label{gamma1}
                \State $\gamma_2 \gets \max(l_i,\sigma(x_i) - \xi_i)$ \label{gamma2}
                \State $\gamma_3 \gets \min(u_i,\sigma(x_i) + \xi_i)$ \label{gamma3}
                \State $\gamma_4 \gets \max(u_i,\min(\sigma(x_i) + \xi_i, x_i^U))$\label{gamma4}
                \State $\calI_i \gets (\calI_i \setminus \langle l_i, u_i \rangle) \cup \langle \gamma_1, \gamma_2 \rangle \cup \langle \gamma_2, \gamma_3\rangle \cup \langle \gamma_3, \gamma_4 \rangle$ \label{partition:split}
            \Else \Comment{Partitioning of largest inactive partition}
                \State $\langle l_i, u_i  \rangle \gets \arg\max_{\calI_i} u_i - l_i$ \label{partition:conv1}
                \State $\calI_i \gets (\calI_i \setminus \langle l_i, u_i \rangle) \cup
                \langle l_i, l_i + \frac{u_i-l_i}{2} \rangle \cup \langle l_i + \frac{u_i-l_i}{2}, u_i \rangle$ \label{partition:conv2}                  
            \EndIf
        \EndFor
        \State \Return $\calI$
    \EndFunction
\end{algorithmic}	
\end{algorithm}

\begin{figure}[htp]
   \centering
   \subfigure[Iteration $k$]{
   \includegraphics[scale=0.82]{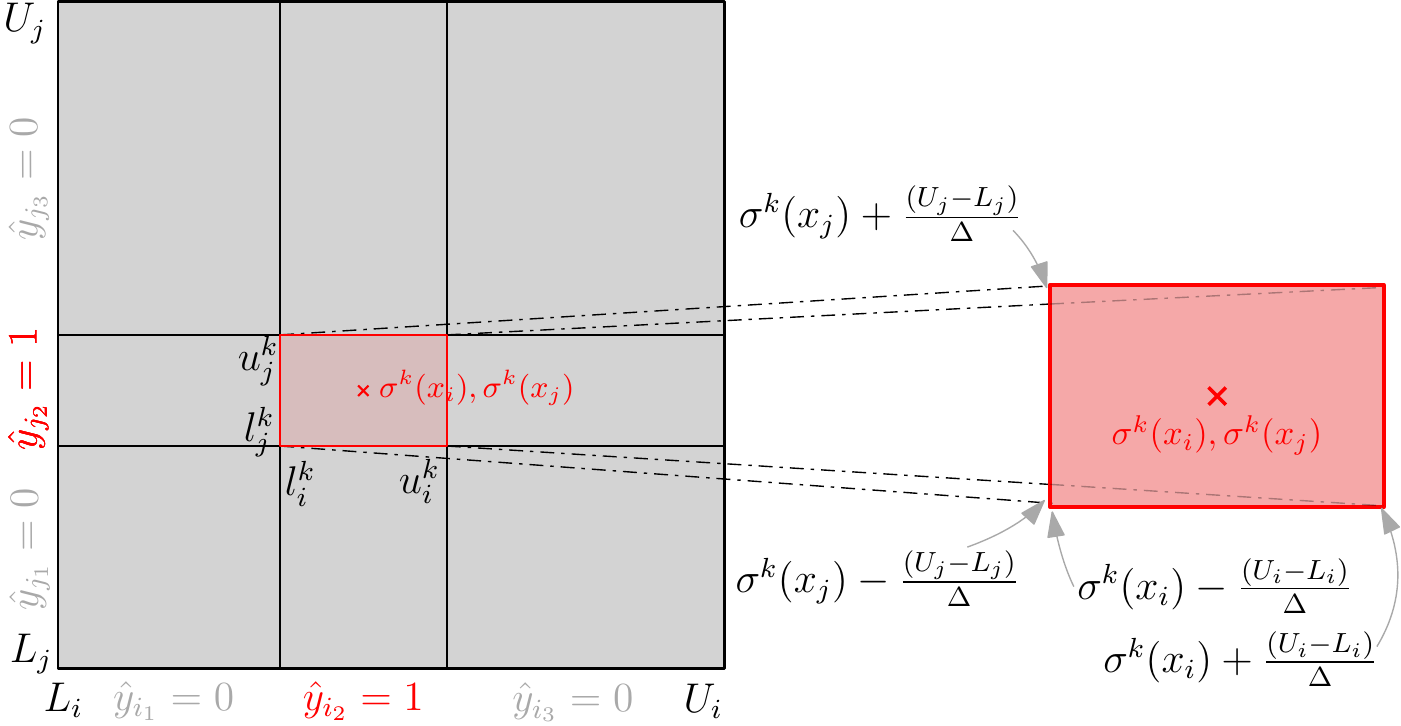}}
   \subfigure[Iteration $k+1$]{
   \includegraphics[scale=0.82]{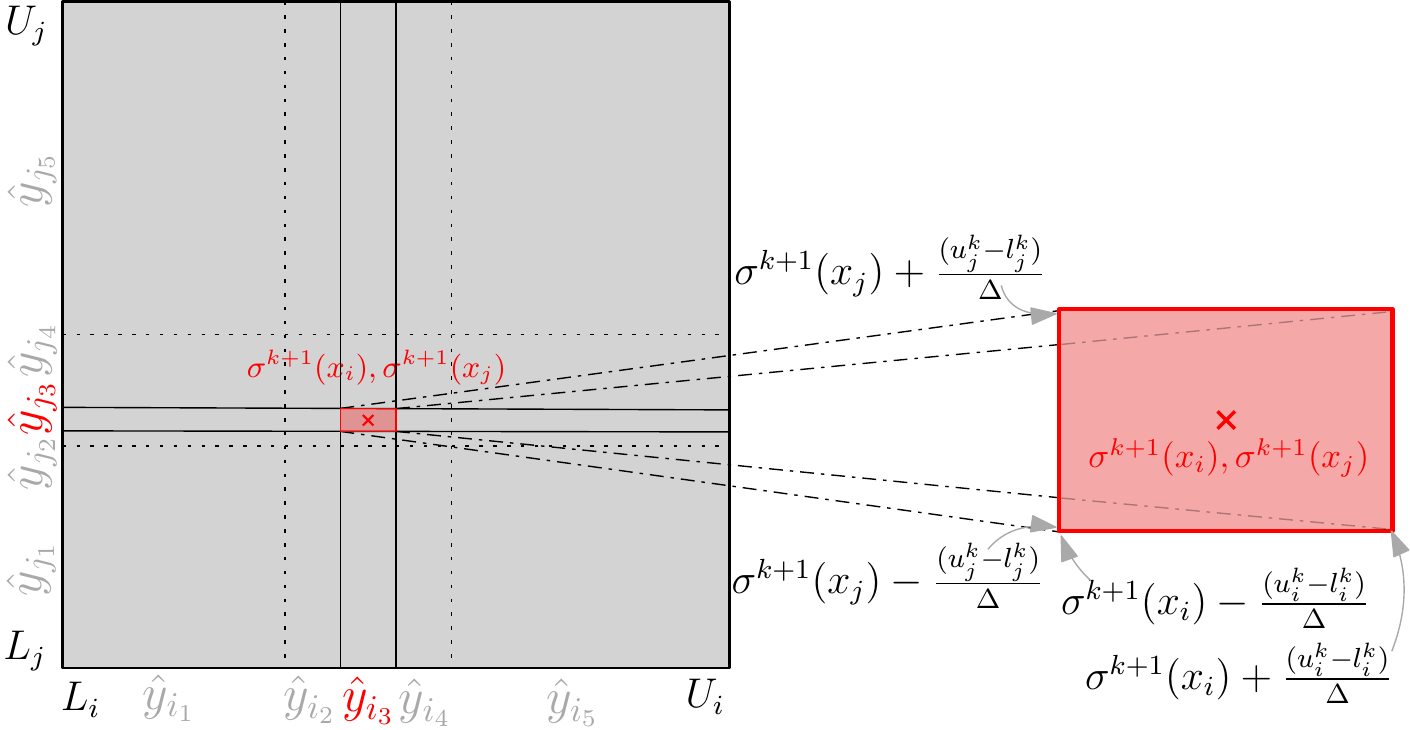}} 
   \caption{Adaptive partitioning strategy for a bilinear function $x_ix_j$ as described in Algorithm \ref{Algo:partition}. Red and gray colored boxes represent active and inactive partitions, respectively. $k$ and $k+1$ refer to successive solutions used to partition the active partition.}
   \label{fig:partitions}
\end{figure}

\kaarthik{\subsubsection{Computing Lower and Upper Bounds} \label{subsubsec:lb_ub}
Once the variable partitions are refined, the main loop of the AMP algorithm constructs and solves a piecewise convex relaxation, $\calP^{\calI}$ (line \ref{amp:lb2} of Algorithm \ref{alg:amp}). Each $\calP^{\calI}$ is a convex MIP where all the constraints are either linear or second-order cones (SOCs). Theoretically, $\calP^{\calI}$ can be solved by off-the-shelf mixed-integer, conic solvers. However, initial computational experiments suggested that several moderately sized problems with SOC constraints were difficult to solve, even with state-of-the-art solvers. It was the case that either the solver convergence was very slow or the solve terminated with a numerical error. To circumvent this issue and solve these convex MIPs in a computationally efficient manner, the SOC constraints in the convex MIP were outer-approximated via first-order approximations, using the lazy-callback feature in the MIP solvers.}

To obtain a new local optimal solution in the main loop of AMP (line \ref{amp:ub2} of Algorithm \ref{alg:amp}), the feasible solution is obtained by solving the NLP, $\calP^u$, shown in Eq. \eqref{eq:Pu}. $\calP^u$ is constructed at each iteration of the main loop using the original MINLP, $\calP$, and the lower bound solution computed at that iteration, $\underline{\sigma}$. 
\begin{subequations}
\label{eq:Pu}
\begin{eqnarray}
\calP^u: \ \ \  \underset{\bm{x},\bm{y}}{\text{minimize}} & &  f(\bm{x},\bm{y}) \\
\text{subject to} & & \bm{g}(\bm{x},\bm{y}) \leq 0, \\
& &  \bm{h}(\bm{x},\bm{y}) = 0, \\
& &  \bm{x}^l_i \cdot \underline{\sigma}(\widehat{\bm{y}}_i) \leq {x}_i \leq \bm{x}^u_i \cdot \underline{\sigma}(\widehat{\bm{y}}_i), \; \; \; \forall \; i=1\ldots n \label{ub:bounds}\\
& &  \bm{y} = \underline{\sigma}(\bm{y}) \label{ub:fix} 
\end{eqnarray}
\end{subequations}

\noindent where constraint \eqref{ub:bounds} forces the variable assignments into the partition defined by the current lower bound. Constraint \eqref{ub:fix} fixes all the original binary variables to the lower bound solution. $\calP^u$ is then solved to local optimality using a local solver. The motivation for this approach is based on empirical observations that the relaxed solution is often very near to the global optimum solution and that the NLP is solved fast once all binary variables are fixed to constant values. $\calP^u$ is essentially a projection of the relaxed solution (lower bound solution, $\underline\sigma$) back onto a near point in the feasible region of $\calP$; this approach is often used for recovering feasible solutions \cite{bergamini2008improved}. In the forthcoming Lemma \ref{lem:1}, we claim that the value of the objective function of the piecewise convex relaxation at each iteration monotonically increases to the global optimum solution with successive partition refinements. 

\begin{lemma}
\label{lem:1}
Let $\underline{\sigma}^{\calI}$ denote the optimal solution to the formulation $\calP^{\calI}$ and let $\sigma^*$ denote the global optimal solution to $\calP$. Then, $f^{\calI}(\underline{\sigma}^{\calI})$ monotonically increases to $f(\sigma^*)$ as $|\calI_i| \rightarrow \infty$ for every $i = 1,\dots,n$. 
\end{lemma}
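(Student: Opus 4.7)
The plan is to split the claim into two assertions: (i) monotonicity of $f^{\calI}(\underline{\sigma}^{\calI})$ under partition refinement, and (ii) convergence of this monotone sequence to $f(\sigma^*)$ as the maximum partition width vanishes. For monotonicity, I would first observe that if $\calI'$ is any refinement of $\calI$ (every cell in $\calI$ is subdivided in $\calI'$), then the feasible region of $\calP^{\calI'}$, projected onto the original $(\bm{x},\bm{y})$ variables, is contained in that of $\calP^{\calI}$. This is because, on each subcell of the finer partition, both the piecewise McCormick and the piecewise quadratic envelopes lie inside the envelopes defined over the coarser cell — the union of convex hulls over a finer cover of a rectangle is contained in the convex hull over a coarser rectangle. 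Hence $f^{\calI}(\underline{\sigma}^{\calI}) \leqslant f^{\calI'}(\underline{\sigma}^{\calI'})$. Since $\calP^{\calI}$ is always a relaxation of $\calP$, we also have $f^{\calI}(\underline{\sigma}^{\calI}) \leqslant f(\sigma^*)$ for every $\calI$, so the sequence is monotone non-decreasing and bounded above, and therefore converges to some limit $L \leqslant f(\sigma^*)$.

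For the convergence to equality, I would appeal to the standard envelope gap bound: on a single bilinear cell $[x_i^l,x_i^u]\times[x_j^l,x_j^u]$, the deviation $|\widehat{x_{ij}} - x_ix_j|$ is at most $\tfrac{1}{4}(x_i^u-x_i^l)(x_j^u-x_j^l)$, and on a single univariate quadratic cell $[x_i^l,x_i^u]$ the deviation $|\widehat{x}_i - x_i^2|$ is at most $\tfrac{1}{4}(x_i^u-x_i^l)^2$. The exhaustiveness clauses in lines \ref{partition:conv1}--\ref{partition:conv2} of Algorithm \ref{Algo:partition} guarantee that, as $|\calI_i|\to\infty$, the maximum partition width $\max_{\langle l,u\rangle\in\calI_i}(u-l)$ shrinks to zero for every $i$. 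Combined with the compactness of the box $[\bm{x}^L,\bm{x}^U]\times\{0,1\}^m$, this yields an accumulation point $\sigma^{\infty}$ of the sequence of relaxed optima $\{\underline{\sigma}^{\calI_k}\}$. The vanishing envelope gaps force all lifted auxiliary variables to coincide, in the limit, with the nonconvex expressions they represent; continuity of the polynomials $f,\bm{g},\bm{h}$ then implies $\sigma^{\infty}$ is feasible for $\calP$, so $f(\sigma^{\infty}) \geqslant f(\sigma^*)$. Passing to the subsequence gives $L = \lim_k f^{\calI_k}(\underline{\sigma}^{\calI_k}) = f(\sigma^{\infty}) \geqslant f(\sigma^*)$, and combined with $L \leqslant f(\sigma^*)$ we conclude $L = f(\sigma^*)$.

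The main obstacle I anticipate is the recursive multilinear case: the envelope gap for $\langle \prod_{k=1}^{|K_t|} x_k\rangle^{MC(\calI)}$ depends not only on the widths of the original partitioned variables $x_1,\dots,x_{|K_t|}$ but also on the widths of the lifted auxiliary variables $\widehat{x}_1,\dots,\widehat{x}_{|K_t|-1}$, and these lifted variables are not partitioned directly. I would address this by showing inductively that the valid bounds on each $\widehat{x}_k$, derived from the box-constraints of its factors, tighten as the underlying partitions shrink, so that a telescoping estimate over the recursion produces a total deviation of order $O\!\left(\max_{i,\langle l,u\rangle\in\calI_i}(u-l)\right)$. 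Once this technical step is carried out, the bilinear and quadratic envelope gap bounds above close the argument and the two assertions combine to give the lemma.
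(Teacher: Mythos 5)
Your proof is sound and reaches the lemma by a genuinely different route than the paper. The paper argues locally: it fixes the active partition that (by the exhaustiveness assumption, stated there as a footnote) eventually contains the global optimizer, writes out the piecewise McCormick inequalities on that partition, isolates the under- and over-estimation errors $\mathcal{E}(\epsilon_i^l,\epsilon_j^l)$ and $\mathcal{E}(\epsilon_i^l,\epsilon_j^u)$ explicitly as McCormick envelopes of $(x_i^*-x_i)(x_j^*-x_j)$, and derives the decay rate $\epsilon_i^l+\epsilon_i^u = (U_i-L_i)/\Delta^{(|\calI_i|-1)/2}$ of the active-partition width, so that the error terms vanish and the relaxed objective is squeezed to $f(\sigma^*)$; monotonicity is dispatched in one line by noting that successive partition sets are refinements. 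You instead run the textbook successive-relaxation argument: nested feasible regions give a monotone bounded sequence, the $\tfrac14(x_i^u-x_i^l)(x_j^u-x_j^l)$ envelope-gap bound plus compactness yields an accumulation point that is feasible for $\calP$, and the sandwich $L\leqslant f(\sigma^*)\leqslant f(\sigma^{\infty})=L$ closes the gap. What the paper's route buys is an explicit convergence rate in terms of $\Delta$ and $|\calI_i|$, which yours does not produce; what your route buys is a cleaner logical structure that does not need to presuppose that the active partition ever contains $\sigma^*$, and an honest treatment of the recursive multilinear case (propagating width bounds through the lifted variables $\widehat{x}_1,\dots,\widehat{x}_{|K_t|-1}$), which the paper sidesteps by restricting to bilinear terms. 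Be aware, though, that both arguments lean on the same unproven assertion at the same pressure point: your claim that the exhaustiveness clause forces the maximum partition width to zero is exactly as strong as the paper's footnoted assumption, since the largest inactive partition is only bisected when the active partition has already shrunk below $\epsilon^p$, so neither write-up fully discharges that obligation.
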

\begin{proof}
Without loss of generality, we assume $\calP$ is feasible and restrict our discussion to bilinear terms. Let $x_ix_j$ be a bilinear term. Given a finite set of partitions, that is, $1 \leqslant |\calI_i|,|\calI_j| < \infty$, there always exists a partition in $\calI_i$ and $\calI_j$ that is active in the solution to $\calP^{\calI}$. Let the active partitions have lengths $\epsilon^l_i+\epsilon^u_i$ and $\epsilon^l_j+\epsilon^u_j$, respectively. Given the exhaustiveness property of the adaptive partitioning scheme discussed in section \ref{subsubsec:adaptive}, assume the active partition contains the global optimum solution $\sigma^*({\bf x}^*, {\bf y}^*)$\footnote{Exhaustiveness of the partitioning scheme implies AMP will eventually partition all other domains small enough such that AMP will pick an active partition with the global optimal whose length is $\le \epsilon^l_i+\epsilon^u_i$. }. 
Then, we have
$$x^*_i - \epsilon^l_i \leqslant x_i \leqslant x^*_i + \epsilon^u_i, \quad x^*_j - \epsilon^l_j \leqslant x_j \leqslant x^*_j + \epsilon^u_j.$$
For these active partitions, the McCormick constraints in \eqref{eq:mcc1} and \eqref{eq:mcc3} linearize $x_ix_j$ as follows:
\begin{align}
    \widehat{x_{ij}} &\geqslant (x^*_i - \epsilon^l_i)x_j + (x^*_j - \epsilon^l_j)x_i - (x^*_i - \epsilon^l_i) (x^*_j - \epsilon^l_j) \label{eq:lem1a}\\
    &= (x_i^*x_j + x_j^*x_i - x_i^*x_j^*) + \underbrace{\epsilon^l_i(x_j^*-x_j) + \epsilon^l_j(x_i^*-x_i) - \epsilon^l_i\epsilon^l_j}_{\mathcal{E}(\epsilon_i^l,\epsilon_j^l)} \nonumber
\end{align}
and 
\begin{align}
    \widehat{x_{ij}} &\leqslant (x^*_i - \epsilon^l_i)x_j + (x^*_j + \epsilon^u_j)x_i - (x^*_i - \epsilon^l_i) (x^*_j + \epsilon^u_j) \label{eq:lem1b}\\
    &= (x_i^*x_j + x_j^*x_i - x_i^*x_j^*) + \underbrace{\epsilon^l_i(x_j^*-x_j) + \epsilon^u_j(x_i-x_i^*) + \epsilon^l_i\epsilon^u_j}_{\mathcal{E}(\epsilon_i^l,\epsilon_j^u)} \nonumber
\end{align}
where, $\mathcal{E}(\epsilon_i^l,\epsilon_j^l) < 0$ and $\mathcal{E}(\epsilon_i^l,\epsilon_j^u) > 0$ are the error terms of the under- and over-estimator, respectively. It is trivial to observe that the error terms themselves constitute the McCormick envelopes of $(x_i^*-x_i)(x_j^*-x_j)$ if one were to linearize this product. Further, observing that the error terms, as described above, are parametrized by the size of the active partition containing the global optimum point, given by $\epsilon_i^l+\epsilon_i^u$ and $\epsilon_j^l+\epsilon_j^u$, for variables $x_i$ and $x_j$, respectively, we now derive the analytic forms of these terms as a function of the total number of partitions for the adaptive partitioning case. 

Line \ref{amp:partition} in Algorithm \ref{alg:amp} ensures that the partitions created during the $(k+1)^{th}$ iteration of the main loop for either of the variables, $x_i$ or $x_j$, is a subset of the partitions created for the corresponding variables in the $k^{th}$ iteration. Also, for a $k^{th}$ iteration of an adaptive refinement step for variables $x_i$ and $x_j$, we assume that at most $3+2(k-1)$  partitions exist within the given variable bounds ($L_i,U_i$) and ($L_j, U_j$), respectively. Thus, the length of the above mentioned active partition which contains the global solution is given by 
$$\epsilon_i^l + \epsilon_i^u = \frac{U_i-L_i}{\Delta^{\frac{|\calI_i|-1}{2}}}, \quad \epsilon_j^l + \epsilon_j^u = \frac{U_j-L_j}{\Delta^{\frac{|\calI_j|-1}{2}}}.$$
\noindent
Clearly as $|\calI_i|$ and $|\calI_j|$ approach $\infty$, the error terms of the McCormick envelopes, $\mathcal{E}(\epsilon_i^l, \epsilon_j^l)$ and $\mathcal{E}(\epsilon_i^l,\epsilon_j^u)$, approach zero, and thus enforcing $x_i=x_i^*$, $x_j=x_j^*$ and $\widehat{x_{ij}} = x_i^*x_j^*$. Therefore, as $|\calI_i|$ approaches $\infty$ for every variable $i$ that is partitioned, $f^{\calI}(\underline{\sigma}^{\calI})$ approaches the global optimal solution $f(\sigma^*)$.  

Also observe that since  $\mathfrak{F}(\calP^{\calI}) \subset \mathfrak{F}(\calP)$ for any finite $\calI$, $f^{\calI}(\underline{\sigma}^{\calI}) \leqslant f(\sigma^*)$. Here, $\mathfrak{F}(\cdot)$  denotes the feasible space of the formulation of ``$\cdot$". Furthermore, the partition set in iteration $k$ is a proper subset of the previous iteration's partitions. This proves the monotonicity of the sequence of values  $f^{\calI}(\underline{\sigma}^{\calI})$ with increasing iterations of the main loop of AMP. 
\end{proof}

\section{Computational results}
\label{sec:results}
In the remainder of this paper, we refer to AMP as Algorithm \ref{alg:amp} without the implementation of line 3, \emph{i.e.}, without any form of sequential OBBT. BT-AMP and PBT-AMP refer to Algorithm \ref{alg:amp} implemented with bound-tightening without and with partitions added, respectively. 
The performance of these algorithms is evaluated 
on a set of standard benchmarks from the literature with mutlilinear terms. 
These problems include a small NLP that is used to highlight the differences between the sparse, adaptive approaches and uniform partitioning approaches. The details and sources for each problem instance are shown later in this section in Table \ref{tab:data}. In this table, we also mention the continuous variables in mutlilinear terms chosen for partitioning\footnote{See \cite{boukouvala2016global} for more details on strategies for choosing the variables for partitioning.}. 
Ipopt { 3.12.8} and Bonmin {1.8.2} are used as local NLP and MINLP solvers for the feasible solution computation in AMP, respectively.
MILPs and MIQCQCPs are solved using CPLEX 12.7 (cpx) and/or Gurobi 7.0.2 (grb) with default options and presolver switched on. 
The outer-approximation algorithm was implemented 
using the lazy callback feature of CPLEX and Gurobi. Given that the bound-tightening procedure consists of independently solvable problems, 10 parallel threads were used during bound-tightening. In the Appendix we provide a detailed sensitivity analysis of the parameters of AMP.

Every bound-tightening (BT and PBT) problem was solved to optimality (except \textit{meyer15}). For \textit{meyer15}, 0.1\% optimality gap was used as a termination criteria because it is a large-scale MINLP.
The value of $\epsilon$ and the ``TimeOut'' parameter in Algorithm \ref{alg:amp} were set to $0.0001$ and $3600$ seconds, respectively. However, for BT and PBT, we did not impose any time limit. Thus, for a fair comparison, we set the time limit for the global solver to be the sum of bound tightening time and 3600 seconds (denoted by $T^+$ in the tables).   
In the results, ``TO'' indicates that the AMP solve timed-out.
All results are benchmarked with BARON 17.1, a state-of-the-art global optimization 
solver \cite{sahinidis1996baron,tawarmalani2005polyhedral}. CPLEX 12.7 and Ipopt 3.12.8 are used as the underlying MILP and non-convex local solvers for BARON. JuMP, an algebraic modeling language in Julia \cite{dunning2017jump}, was used for implementing all the algorithms and invoking the optimization solvers. All the computational experiments were performed using the high-performance computing resources at the Los Alamos National Laboratory with Intel CPU E5-2660-v3, Haswell micro-architecture, 20 cores (2 threads per core) and 125GB of memory.

\subsection{Performance on a Small-scale NLP}
\begin{equation*}
\begin{aligned}
\mathit{NLP1:} \quad &  \underset{x_1,\ldots,x_8}{\text{minimize}} & &  x_1+x_2+x_3 \\
&\text{subject to} & & 0.0025(x_4+x_6)-1 \leq 0, \\
& & & 0.0025(-x_4+x_5+x_7) -1 \leq 0, \\
& & & 0.01(-x_5+x_8) -1 \leq 0, \\
& & & 100x_1 - x_1x_6 + 833.33252 x_4 - 83333.333 \leq 0, \\
& & & x_2x_4 - x_2x_7 - 1250x_4 + 1250x_5 \leq 0, \\
& & & x_3x_5 - x_3x_8  - 2500x_5 +1250000 \leq 0, \\
& & & 100 \leq x_1 \leq 10000, \\
& & & 1000 \leq x_2,x_3 \leq 10000, \\
& & & 10 \leq x_4,x_5,x_6,x_7,x_8 \leq 1000
\end{aligned}
\end{equation*}

In this section, we perform a detailed study of AMP with and without OBBT on \textit{NLP1}, a small-scale, continuous nonlinear program adapted from Problem 106 in \cite{hock1980test}. This small problem helps illustrate many of the salient features of AMP. \textit{NLP1} has gained considerable interest from the global optimization literature due its large variable bounds and weak McCormick relaxations. Since this is a challenging problem for uniform, piecewise McCormick relaxations, this problem has been studied in detail in \cite{castro2015tightening,castro2015normalized,teles2013univariate}. The value of the global optimum for \textit{NLP1} is 7049.2479 and the solution is  $x^*_i, \ i=1,\ldots,8 = [579.307, 1359.97, 5109.97, 182.018, 295.601, 217.982, 286.417, 395.601]$.

\subsubsection{AMP versus Uniform Partitioning on \textit{NLP1}}
Figure \ref{Fig:p3_adap_unif} compares AMP (without BT/PBT) with a uniform partitioning strategy that is often used in state-of-the-art solvers to obtain global solutions. For a fair comparison, we used the same number of partitions for both methods at every iteration. From Figure \ref{Fig:p3_adap_unif}(a), it is evident that AMP exhibits larger optimality gaps in the first few iterations (134\%, 44\%, 20\%, vs. 97\%, 44\%, 16\%, etc.). However, the convergence rate to global optimum is much faster with AMP (within 190.9 seconds). This behaviour is primarily attributed to the adaptive addition of partitions around the best-known local solution (also global in this case) instead of spreading them uniformly. 

\begin{figure}[htp]
  \centering
  \subfigure[Optimality gap]{
  \includegraphics[scale=0.71]{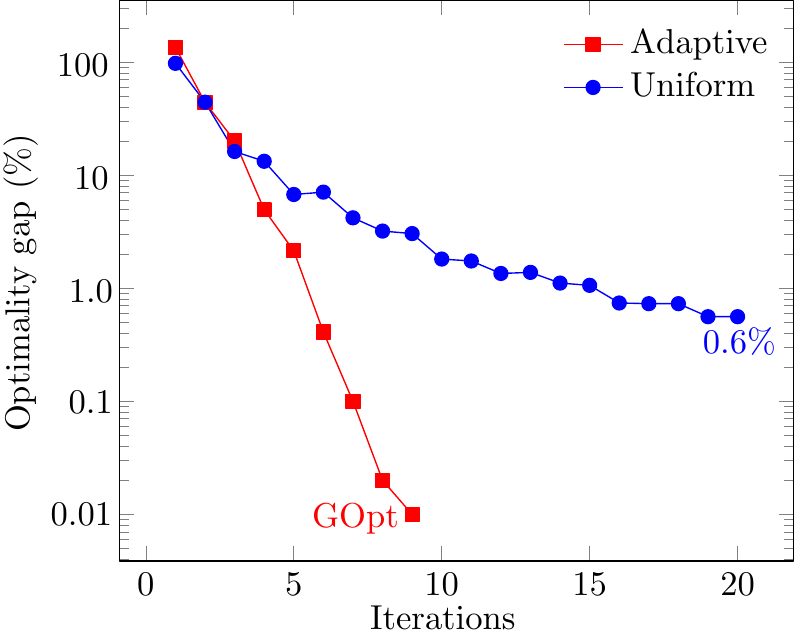}} 
  \subfigure[Cumulative time]{
  \includegraphics[scale=0.71]{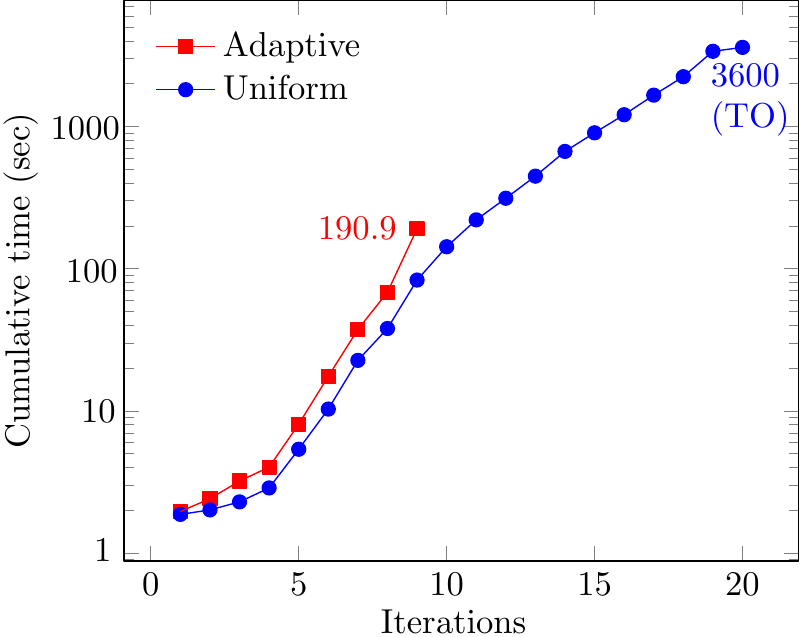}} 
  
  \caption{Performance of AMP ($\Delta=4$) and uniform partitioning on \textit{NLP1}. Note that the y-axis is on log scale.}
  \label{Fig:p3_adap_unif}
\end{figure}

\subsubsection{Performance of AMP on \textit{NLP1}}

Figure \ref{Fig:p3_partitions} shows the active partitions chosen by every iteration of AMP for $\Delta=4$. 
This figure illustrates the active partitions at each iteration of the main loop of AMP and the convergence of each variable partition to its corresponding global optimal value. One of the primary motivations of the adaptive partitioning strategy comes from the observation that every new partition added adaptively refines the regions (hopefully) closer to the global optimum values.
In Figure \ref{Fig:p3_partitions}(a), this behaviour is clearly evident on all the variables except $x_3$. 
Although the initial active partition on $x_3$ did not contain the global optimum, AMP converged to the global optimum value. The convergence time of AMP to the global optimum using Gurobi was \textit{190.9 seconds}. The total number of binary partitioning variables is 152 (19 per continuous variable).  

\begin{figure}[htp]
  \centering
  \subfigure[Variables $x_1,x_3$]{
  \includegraphics[scale=0.63]{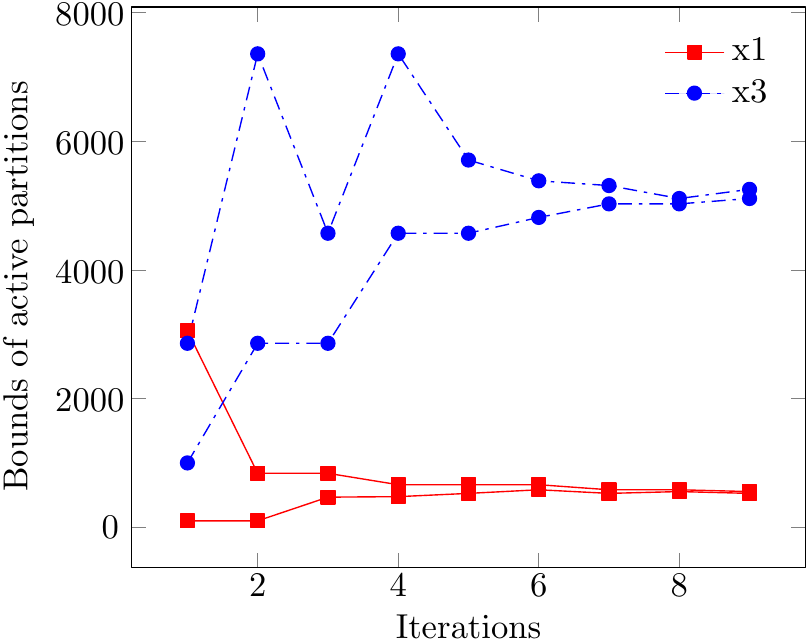}}       
  \subfigure[Variables $x_2,x_4$]{
  \includegraphics[scale=0.63]{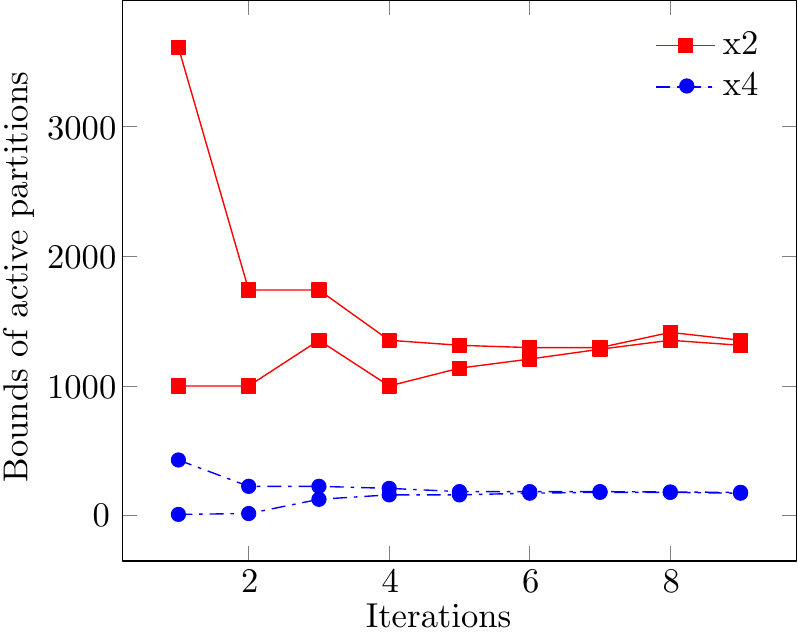}} 
  \subfigure[Variables $x_5,x_6$]{
  \includegraphics[scale=0.63]{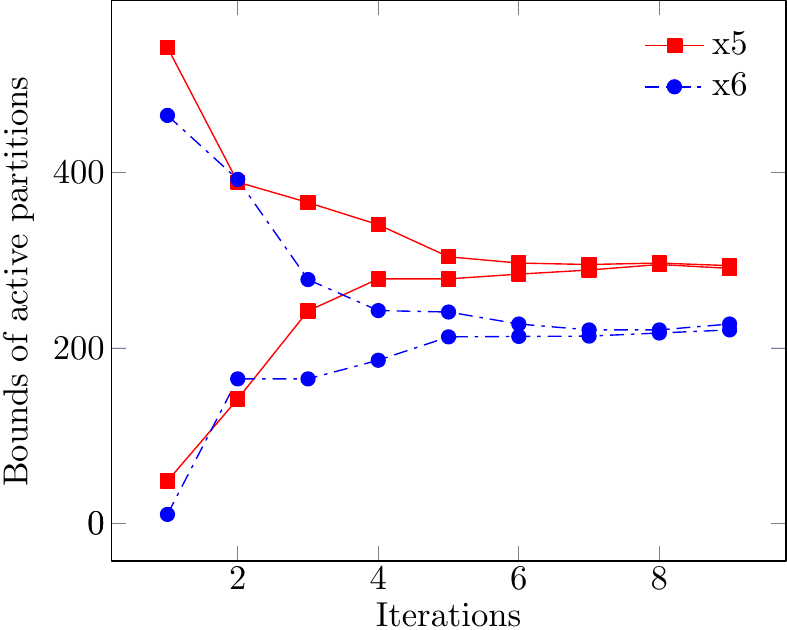}} 
  \subfigure[Variables $x_7,x_8$]{
  \includegraphics[scale=0.63]{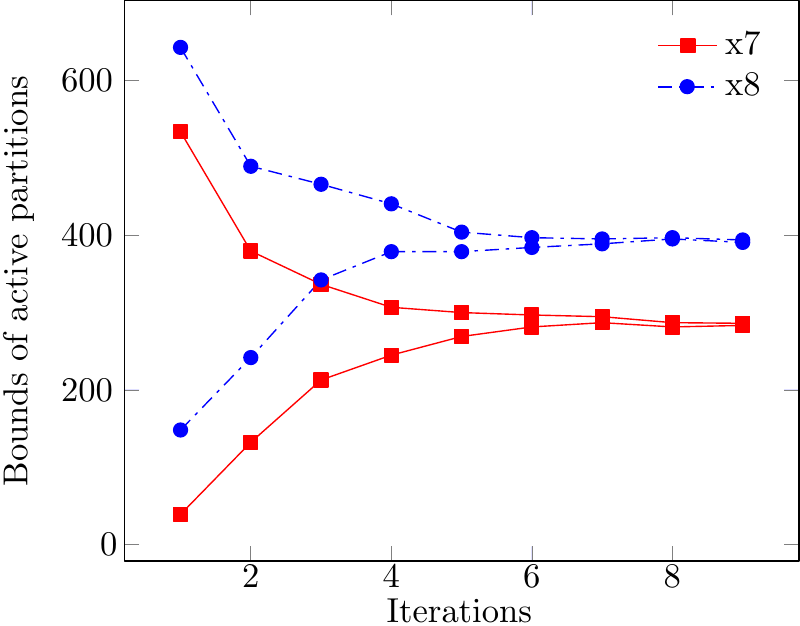}} 
  \caption{Upper and lower bounds of active partitions chosen by AMP algorithm (without OBBT) for the variables $x_i, i=1,\ldots,8$ of \textit{NLP1}.}
  \label{Fig:p3_partitions}
\end{figure}

\subsubsection{Benefits of MIP-based OBBT on \textit{NLP1}}
\begin{figure}[htp]
  \centering
  \subfigure[Tightened bounds after each iteration]{
  \includegraphics[scale=0.36]{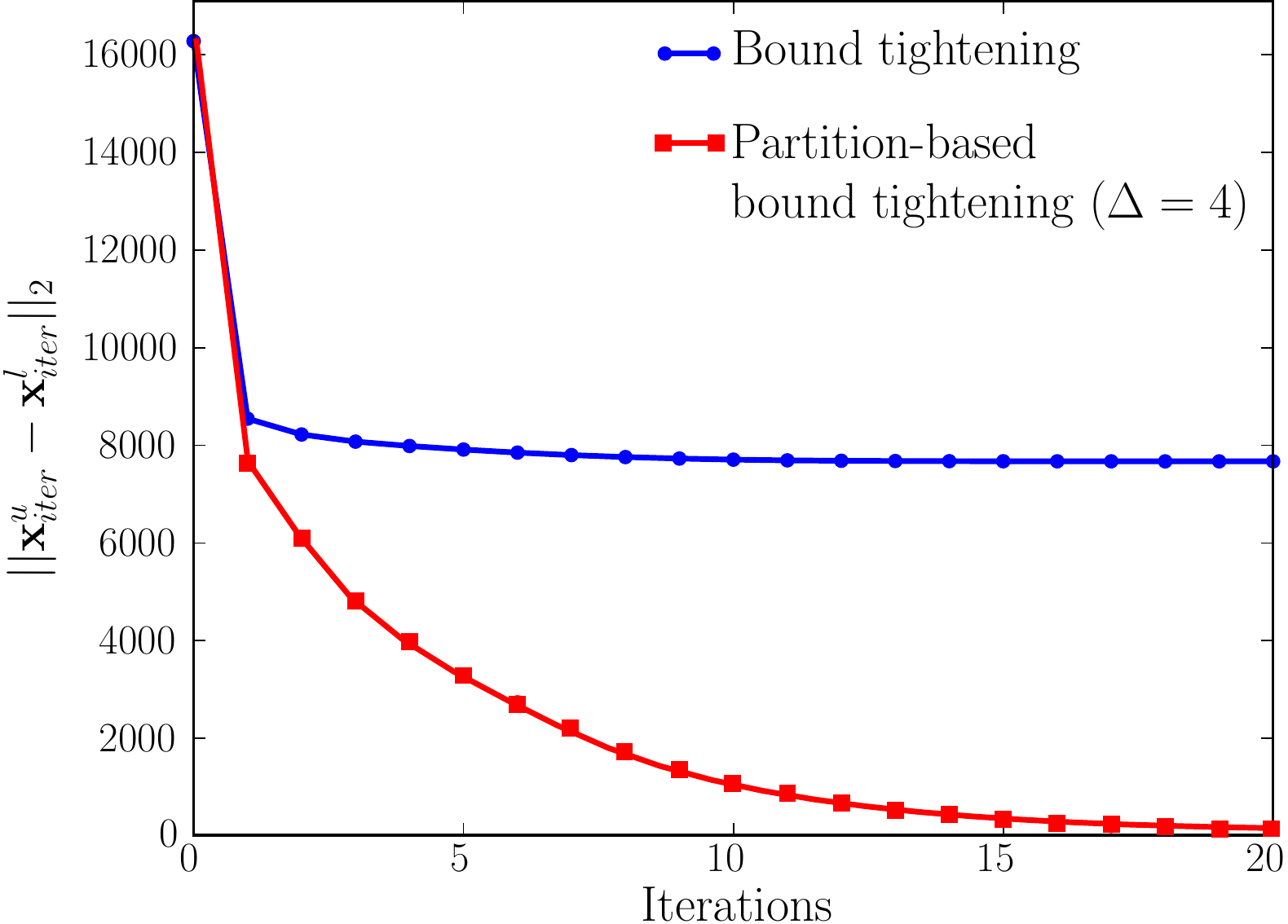}}       
  \subfigure[Time per iteration]{
  \includegraphics[scale=0.3685]{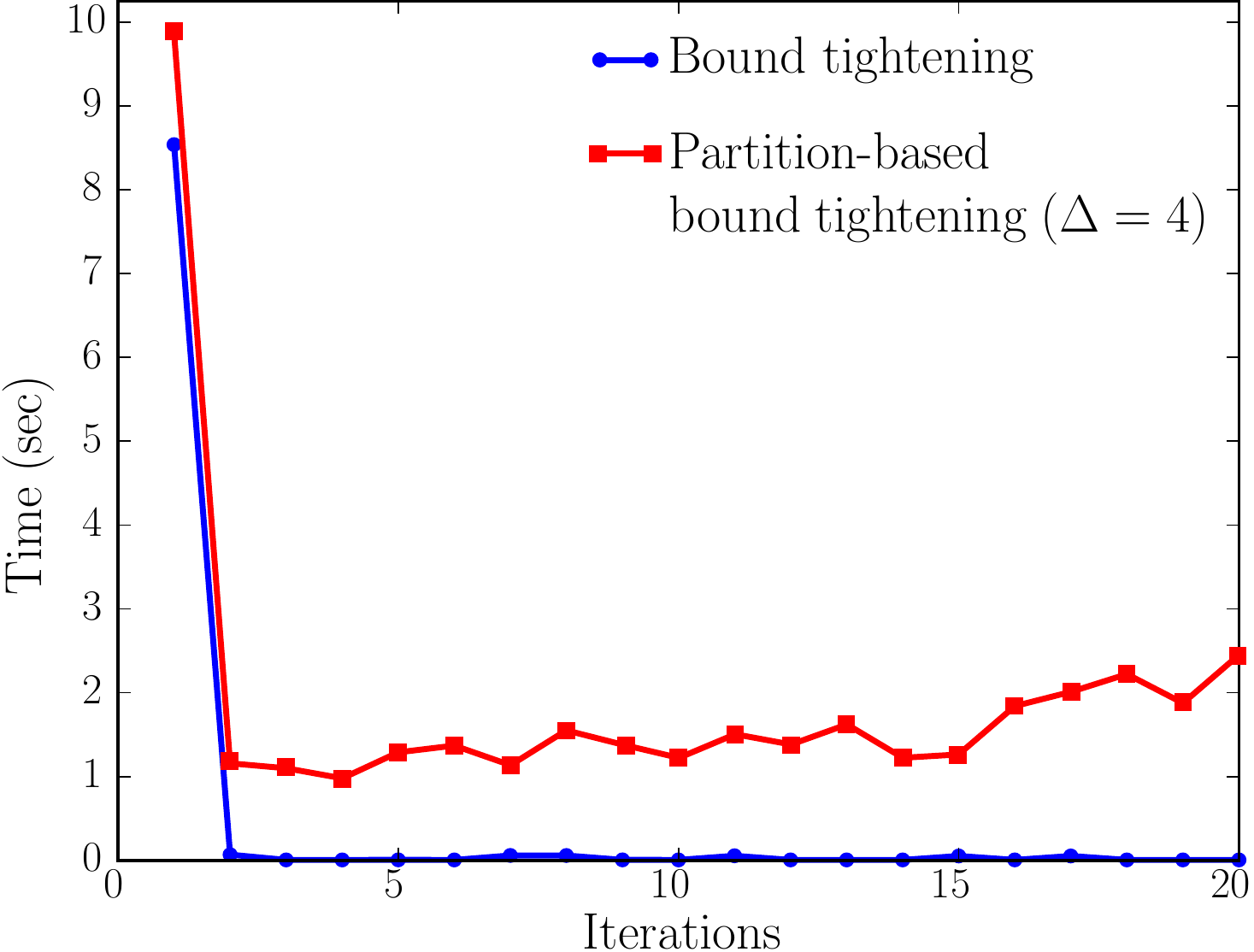}} 
  
  \caption{Performance of sequential BT and sequential PBT techniques on \textit{NLP1}.}
  \label{Fig:p3_BL}
\end{figure}

\begin{table}[htp]
\caption{Contracted bounds after applying sequential bound tightening to nlp1.}
\centering
\begin{tabular}{ccccccc}
\toprule
 & \multicolumn{2}{c}{Original bounds} & \multicolumn{2}{c}{PBT bounds} &
 \multicolumn{2}{c}{\#BVars added}\\
 \cmidrule(lr){2-3}
 \cmidrule(lr){4-5}
 \cmidrule(lr){6-7}
 Variable & $L$ & $U$ & $l$ & $u$ & BT, AMP & PBT, AMP \\
 &&&&&($\Delta=4$)&($\Delta=4$) \\
 \midrule
$x_1$ &100 &10000 &573.1 &585.1 &0, 14 &3, 3 \\
$x_2$ &1000 &10000 &1351.2 &1368.5 &0, 14 &3, 3\\
$x_3$ &1000 &10000 &5102.1 &5117.5 &0, 15 &3, 3\\
$x_4$ &10 &1000 &181.5 &182.5 &0, 15 &3, 3\\
$x_5$ &10 &1000 &295.3 &296.0 &0, 15 &3, 3\\
$x_6$ &10 &1000 &217.5 &218.5 &0, 15 &3, 3\\
$x_7$ &10 &1000 &286.0 &286.9 &0, 15 &3, 3\\
$x_8$ &10 &1000 &395.3 &396.0 &0, 15 &3, 3\\
\cmidrule(r){1-7}
Total &&&&&118&48 \\
\bottomrule
\end{tabular}
\label{tab:tighten}
\end{table}

Figure \ref{Fig:p3_BL} and Table \ref{tab:tighten} show the effectiveness of sequential BT and sequential PBT techniques on \textit{NLP1}. As expected, in Figure \ref{Fig:p3_BL}(a), the disjunctive polyhedral representation of the relaxed regions in PBT (around the initial local solution) drastically reduce the global bounds on the variables (to almost zero gaps). 
Figure \ref{Fig:p3_BL}(b) shows that PBT, even when solving a MILP in every iteration, does not incur too much computational overhead on a small-scale problem like \textit{NLP1}. 

A qualitative description of the improved performance is presented in Table \ref{tab:tighten}. {The column titled ``\#BVars added'' shows the total number of partitions that were added for each variable for BT-AMP and PBT-AMP. While BT adds no partitions and AMP adds a total of 118 partitions, PBT adds a total of 24 partitions in addition to 24 more partitions during AMP. As is seen in the results in the ``PBT bounds" column, the bounds of the variables are tightened to near global optimum values and AMP needs very few additional partitions to prove global optimality within 40 seconds. In contrast, AMP adds a lot more partitions as the bounds of the variables after BT are not tight enough.} Overall, for \textit{NLP1}, it is noteworthy that PBT-AMP outperforms most of the state-of-the-art piecewise relaxation methods developed in the literature.





\subsection{Performance of AMP on Large-scale MINLPs}
\label{subsec:perf_no_bnd}

\setlength\tabcolsep{4 pt}
\begin{table}[htp]
    \centering
    \scriptsize
    \caption{Summary of the performance of AMP without OBBT on all instances. 
    Here, we compare the run times of BARON, AMP with $\Delta=8$ (cpx), AMP with the best $\Delta$ (cpx) and AMP with the best $\Delta$ (grb). Values under ``Gap'' and ``$T$'' are in \% and seconds, respectively. ``Inf'' implies that the solver failed to provide a bound within the prescribed time limit. For each instance, the bold face font represents best run time or the best optimality gap (if the solve times out)}
    \begin{tabular}{l|rr|rr|rr|rrr|rrr}
\toprule
& \multicolumn{2}{c}{{COUENNE}} & \multicolumn{2}{|c}{BARON} & \multicolumn{2}{|c}{AMP-cpx} & \multicolumn{3}{|c|}{AMP-cpx} & \multicolumn{3}{c}{AMP-grb} \\
& \multicolumn{2}{c}{} & \multicolumn{2}{|c}{} & \multicolumn{2}{|c}{$\Delta=8$} & \multicolumn{3}{|c|}{$\Delta^*$} & \multicolumn{3}{c}{$\Delta^*$} \\
\cmidrule(lr){2-3} \cmidrule(lr){4-5} \cmidrule(lr){6-7} \cmidrule(lr){8-10} \cmidrule(lr){11-13}
Instances & Gap & $T$ & Gap & $T$ & Gap & $T$ & $\Delta$ & Gap & $T$ & $\Delta$ & Gap & $T$ \\
\midrule
p1 & {\bf GOpt} & {\bf 0.01} & GOpt & 0.02 & GOpt & 0.26 & 32 & GOpt & 0.19 & 32 & GOpt & 0.06 \\
p2 & {\bf GOpt} & {\bf 0.01} & {\bf GOpt} & {\bf 0.01} & GOpt & 0.05 & 16 & GOpt & 0.03 & 32 & GOpt & 0.10 \\
fuel & Inf & N/A & {\bf GOpt} & {\bf 0.03} & GOpt & 0.07 & 4 & {\bf GOpt} & {\bf 0.03} & 4 & GOpt & 0.05 \\
ex1223a & {\bf GOpt} & {\bf 0.01} & GOpt & 0.02 & {\bf GOpt} & {\bf 0.01} & 32 & {\bf GOpt} & {\bf 0.01} & 16 & GOpt & 0.02 \\
ex1264 & GOpt & 2.04 & GOpt & 1.44 & GOpt & 1.42 & 16 & GOpt & 0.90 & 8 & {\bf GOpt} & {\bf 0.79} \\
ex1265 & GOpt & 5.22 & GOpt & 13.30 & GOpt & 0.94 & 16 & {\bf GOpt} & {\bf 0.17} & 8 & GOpt & 0.28 \\
ex1266 & GOpt & 5.37 & GOpt & 10.81 & GOpt & 0.27 & 32 & {\bf GOpt} & {\bf 0.14} & 32 & GOpt & 0.16 \\
eniplac & GOpt & 128.82 & GOpt & 207.37 & GOpt & 1.17 & 32 & {\bf GOpt} & {\bf 0.68} & 32 & GOpt & 0.75 \\
util & GOpt & 14.56 & {\bf GOpt} & {\bf 0.10} & GOpt & 1.21 & 16 & GOpt & 0.54 & 16 & GOpt & 0.55 \\
meanvarx & GOpt & 1.06 & {\bf GOpt} & {\bf 0.05} & GOpt & 290.61 & 16 & GOpt & 95.51 & 16 & GOpt & 70.09 \\
blend029 & GOpt & 35.18 & GOpt & 2.46 & GOpt & 1.74 & 32 & {\bf GOpt} & {\bf 0.74} & 32 & GOpt & 1.00 \\
blend531 & 3.06 & TO & GOpt & 111.79 & GOpt & 185.33 & 8 & GOpt & 185.33 & 16 & {\bf GOpt} & {\bf 49.76} \\
blend146 & 4.19 & TO & 2.20 & TO & 2.01 & TO & 8 & 2.01 & TO & 8 & {\bf 1.60} & {\bf TO} \\
blend718 & 156.45 & TO & 175.10 & TO & {\bf GOpt} & {\bf 379.58} & 8 & {\bf GOpt} & {\bf 379.58} & 8 & GOpt & 581.68 \\
blend480 & 102.41 & TO & {\bf GOpt} & {\bf 326.95} & 0.32 & TO & 32 & 0.04 & TO & 16 & 0.02 & TO \\
blend721 & 0.60 & TO & GOpt & 548.90 & GOpt & 504.74 & 32 & GOpt & 256.77 & 16 & {\bf GOpt} & {\bf 176.11} \\
blend852 & 1.74 & TO & 0.08 & TO & GOpt & 750.88 & 4 & {\bf GOpt} & {\bf 169.24} & 16 & GOpt & 322.80 \\
wtsM2\_05 & GOpt & 3426.28 & {\bf GOpt} & {\bf 153.30} & 20.08 & TO & 8 & 20.08 & TO & 32 & GOpt & 386.95 \\
wtsM2\_06 & 31.95 & TO & {\bf GOpt} & {\bf 228.18} & 8.76 & TO & 4 & GOpt & 2395.71 & 32 & GOpt & 972.20 \\
wtsM2\_07 & {\bf GOpt} & {\bf 68.37} & GOpt & 759.96 & 0.10 & TO & 8 & 0.10 & TO & 16 & 0.54 & TO \\
wtsM2\_08 & 39.43 & TO & 388.62 & TO & 9.82 & TO & 4 & {\bf 5.45} & {\bf TO} & 4 & 7.92 & TO \\
wtsM2\_09 & 60.37 & TO & Inf & TO & 68.58 & TO & 10 & 36.47 & TO & 4 & {\bf 7.47} & {\bf TO} \\
wtsM2\_10 & 0.64 & TO & 76.48 & TO & 35.88 & TO & 32 & 24.95 & TO & 16 & {\bf 0.10} & {\bf TO} \\
wtsM2\_11 & 64.71 & TO & 107.56 & TO & 7.88 & TO & 16 & {\bf 3.50} & {\bf TO} & 4 & 6.10 & TO \\
wtsM2\_12 & 68.56 & TO & 85.35 & TO & 8.07 & TO & 4 & 7.46 & TO & 32 & {\bf 4.00} & {\bf TO} \\
wtsM2\_13 & 47.74 & TO & 54.04 & TO & 10.06 & TO & 4 & {\bf 4.24} & {\bf TO} & 8 & 5.72 & TO \\
wtsM2\_14 & 47.15 & TO & 46.24 & TO & 9.02 & TO & 32 & 6.34 & TO & 16 & {\bf 1.43} & {\bf TO} \\
wtsM2\_15 & 61.32 & TO & Inf & TO & 86.64 & TO & 4 & 8.81 & TO & 8 & {\bf 0.22} & {\bf TO} \\
wtsM2\_16 & 26.72 & TO & 47.77 & TO & 34.46 & TO & 8 & 34.46 & TO & 32 & {\bf 5.25} & {\bf TO} \\
lee1 & GOpt & 46.97 & GOpt & 145.55 & {\bf GOpt} & {\bf 13.01} & 8 & {\bf GOpt} & {\bf 13.01} & 8 & GOpt & 13.61 \\
lee2 & {\bf GOpt} & {\bf 60.43}& GOpt & 590.08 & 0.58 & TO & 16 & 0.47 & TO & 16 & 0.08 & TO \\
meyer4 & Inf & N/A & 80.40 & TO & GOpt & 18.85 & 4 & GOpt & 12.50 & 8 & {\bf GOpt} & {\bf 5.68} \\
meyer10 & 80.88 & TO & 239.70 & TO & GOpt & TO & 4 & GOpt & 452.53 & 8 & {\bf GOpt} & {\bf 133.47} \\
meyer15 & Inf & N/A & 2850.30 & TO & 0.59 & TO & 16 & 0.31 & TO & 4 & {\bf 0.10} & {\bf TO} \\
\bottomrule
    \end{tabular}
    \label{Tab:baron_3600}
\end{table}
\setlength\tabcolsep{6 pt} 

In this section we assess the empirical value of adaptive partitioning by presenting results without OBBT.
In Table \ref{Tab:baron_3600}, AMP (without OBBT) with CPLEX or Gurobi is compared with BARON. Columns two and three show the run times of Couenne and BARON, respectively, based on the 3600 second time limit. Column four shows the performance of AMP for $\Delta=8$.
Though $\Delta=8$ is not the ideal setting for every instance,
it is analogous to running BARON/Couenne with default parameters. Under the default AMP settings, AMP is faster than BARON and Couenne (with default settings) at finding the best lower bound in 23 out of 34 instances. { These results of AMP are the most fair to compare with untuned BARON and Couenne}

In column five (tuned $\Delta$ and CPLEX), the run times of AMP are much faster than BARON and Couenne in 24 out of 34 instances. Column six (tuned $\Delta$ and Gurobi) again indicates that the right choice of $\Delta$ speeds up the convergence of AMP drastically. More interestingly, on 21 out of 34 instances, the run times of AMP using Gurobi are substantially better than the run times using CPLEX (column 5). 

Table \ref{Tab:baron_3600} is summarized with a cumulative distribution plot in Figure \ref{Fig:baron_3600}. Clearly, Figure \ref{Fig:baron_3600}(a) indicates that AMP is better able to find solutions within a 0.4\% optimality gap (even without tuning).
Figure \ref{Fig:baron_3600}(b) provides evidence of the overall strength of AMP. Even when AMP is not the fastest approach, its run times are very similar to BARON. 
Overall, the performance of AMP is clearly better using Gurobi as the underlying MILP/MIQCQP solver. We did not perform comparative studies of BARON with Gurobi because it cannot currently integrate with Gurobi. 


\begin{figure}[h]
   \centering
  \subfigure[Comparison of best gap]{
  \includegraphics[width=0.72\textwidth]{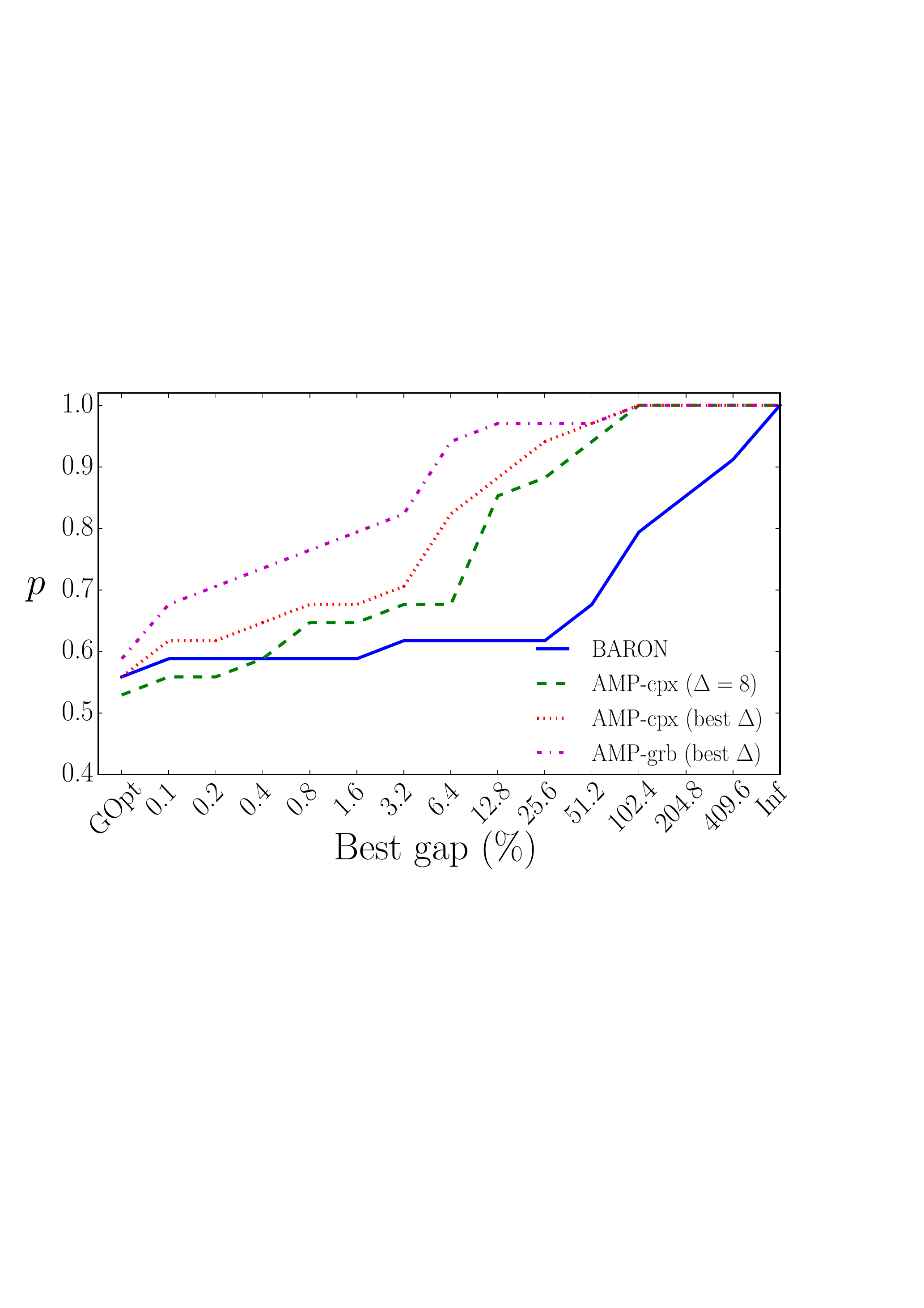}}
  \subfigure[Comparison of best run times]{
  \includegraphics[width=0.72\textwidth]{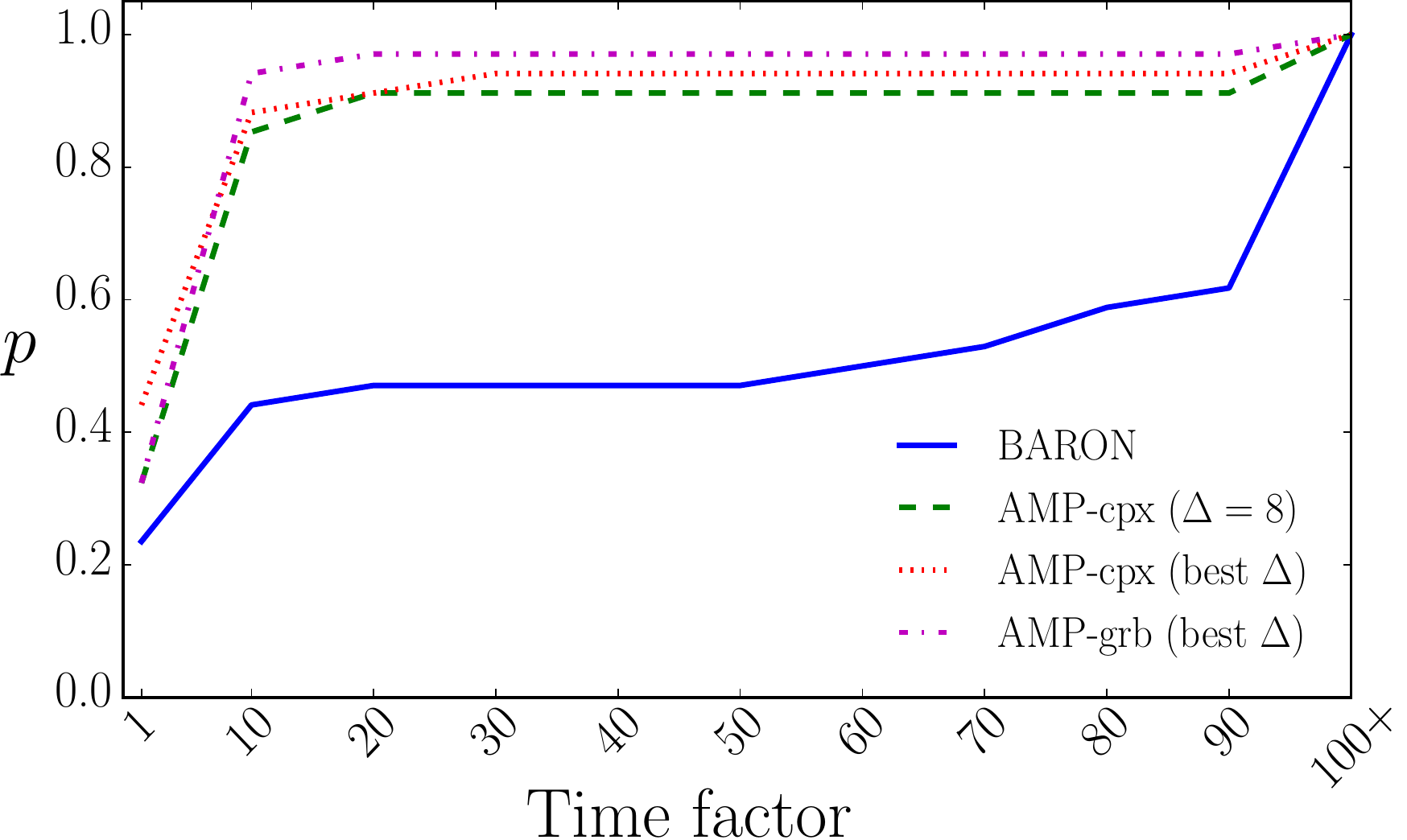}}
  \caption{Performance profiles of AMP (without OBBT) and BARON. In (a), the x axis plots the 
  optimality gap of the algorithms and the y axis plots fraction of instances. Plot (a) tracks the number of instances where an algorithm is able achieve the specified optimality gap. In (b), the x axis denotes the run time ratio of an algorithm with the best run time of any algorithm. The y axis denotes the fraction of instances. Plot (b) tracks the number of times an algorithm's run time is within a specified factor of the best run time of any algorithm.
  In both figures, higher is better. Overall, AMP performs better than BARON on a $p$ proportion of instances for most gaps and all run times.}
   \label{Fig:baron_3600}
\end{figure}

\subsection{Performance of AMP with OBBT}
\label{subsec:perf_withm_bnd}

We next discuss the performance of AMP when OBBT is added.

\subsubsection{Default Parameters of $\Delta$}
\label{subsubsec:without_param}
We first consider AMP with OBBT when AMP uses the default parameter of $\Delta=8$.
Table \ref{Tab:baron_ext}, compares AMP with BARON. 
Column two shows the run times of BARON based on a prescribed time limit. For comparison purposes with AMP, the time limit of BARON is calculated as the sum of 3600 seconds and the maximum of the run time of BT and PBT. For the purposes of this study, we did not specify a time limit on BT and PBT, though this could be added.

Column three shows the performance of BT-AMP when $\Delta=8$. 
While a constant $\Delta$ is not the ideal parameter for every instance, BT-AMP with CPLEX is still faster than BARON on 21 out of 32 instances. Similarly, BT-AMP with Gurobi is faster in 25 out of 32 instances. Once again, AMP with Gurobi has significant computational advantages over CPLEX. Instances
\emph{blend480}, \emph{blend721}, \emph{blend852}, and \emph{meyer10} demonstrated an order of magnitude improvement.
Column four of Table \ref{Tab:baron_ext} shows the results of PBT-AMP when $\Delta=10$.
Though PBT solves a more complicated, discrete optimization problem at every step of bound-tightening, surprisingly, the total time spent in bound-tightening was typically significantly smaller.
{This is seen in all instances prefixed with ``\emph{ex}''}, the \emph{util} instance, the \emph{eniplac} instance, the \emph{meanvarx} instance and a few \emph{blend} instances. {In general, using partition-based OBBT yields significant improvements in the overall run times and optimality gaps of AMP.}

The bound-tightening procedure also compares favorably with BARON. For example, consider problem \emph{blend852}. 
Though BARON implements a sophisticated bound-tightening approach that is based on primal and dual formulations, BARON times out with a 0.08\% gap. 
In contrast, BT-AMP with Gurobi converges to the global optimum in 434.7 seconds and PBT-AMP converges to the global optimum in 78.1 seconds (an order-of-magnitude improvement). Similar behaviour is observed on the remaining \emph{blend}, \emph{wts} and \emph{meyer} instances. 
Overall, AMP with Gurobi outperforms BARON on 24 out of 32 instances when a default choice of $\Delta$ is used.

Table \ref{Tab:baron_ext} is summarized with a cumulative distribution plot in Figure \ref{Fig:baron_ext}. Clearly, Figure \ref{Fig:baron_ext}(a) indicates that BT-AMP and PBT-AMP with Gurobi performs better than BARON even without tuning $\Delta$. In \ref{Fig:baron_ext}(a), AMP has a better profile when the optimality gap is $> 0.4\%$. In Figure \ref{Fig:baron_ext}(a), the performance improvement starts at $0.2\%$.
However, there is an increase in run times due to bound-tightening (Figure \ref{Fig:baron_ext}(b)), that allows to AMP to achieve this improvement.


\begin{remark}
\emph{meyer15}, a generalized pooling problem-based instance, is classified as a large-scale MINLP and is very hard for global optimization. The current best known gap for this instance is 0.1\% \cite{misener2011apogee,bussieck2003minlplib}. PBT-AMP with Gurobi has closed this problem by proving the \textit{global optimum} for the first time (\emph{943734.0215}--Table \ref{Tab:baron_ext}).   
\end{remark}

\begin{figure}[h]
   \centering
   \subfigure[Comparison of best gap]{
   \includegraphics[width=0.72\textwidth]{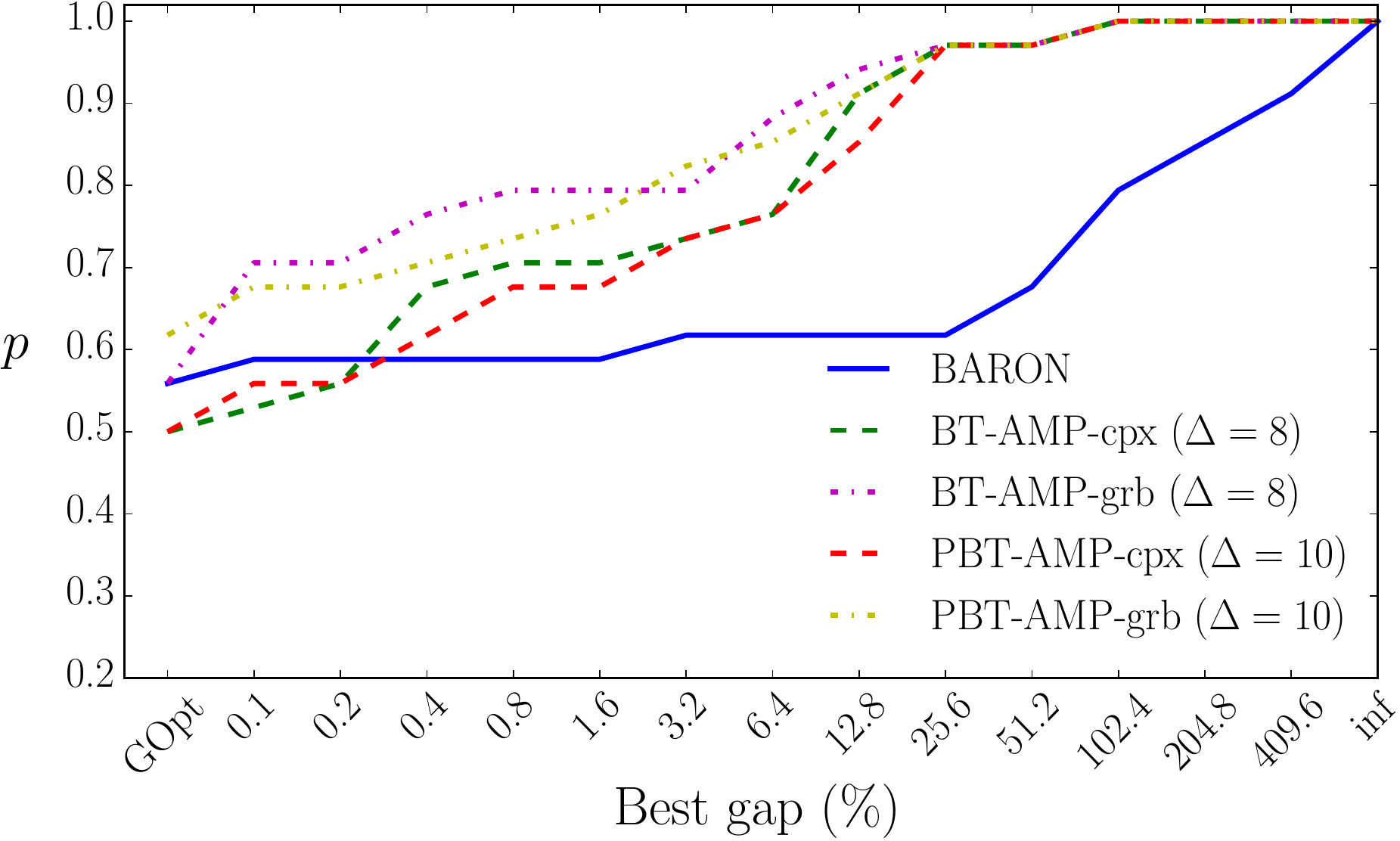}}
   \subfigure[Comparison of best run times]{
   \includegraphics[width=0.72\textwidth]{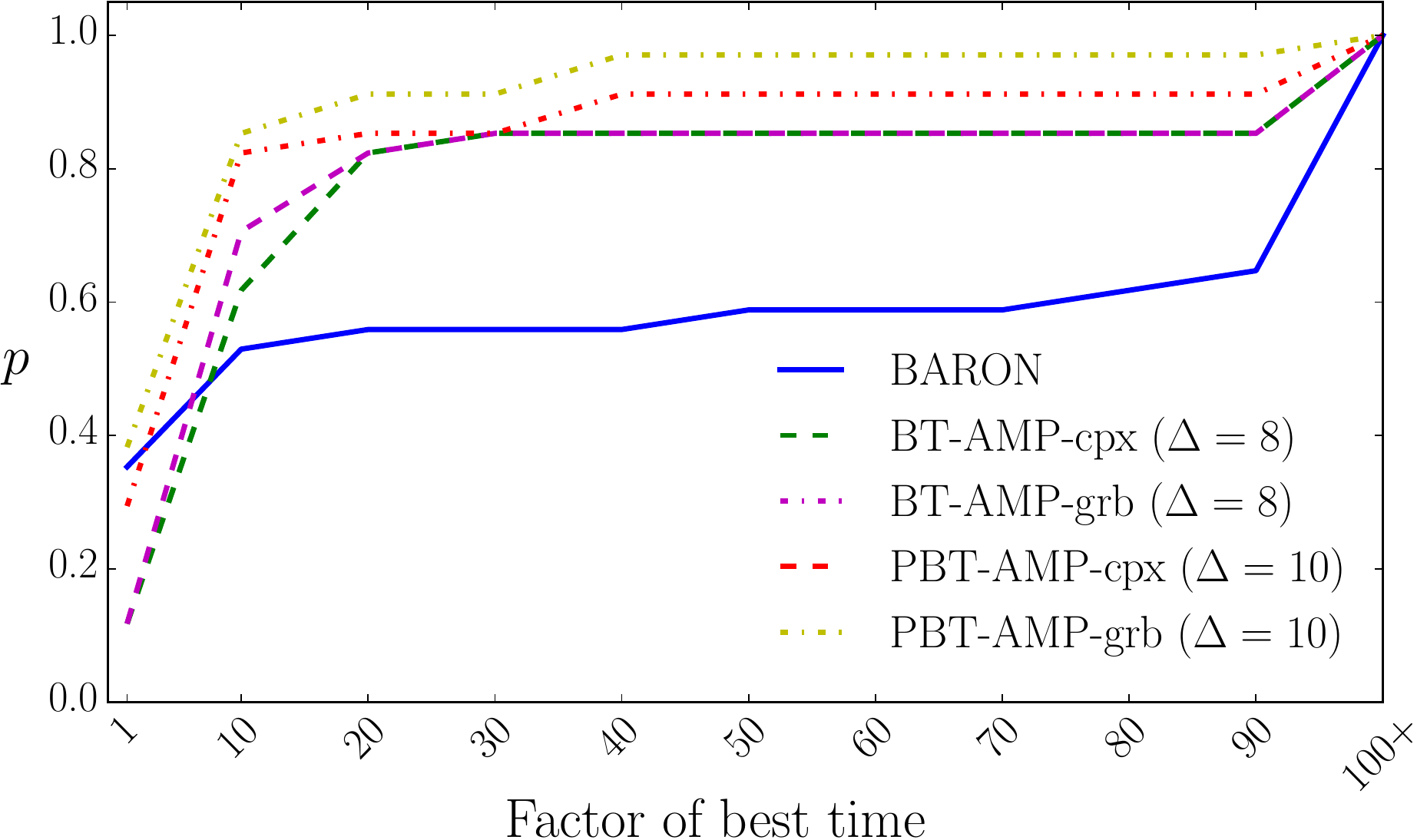}} 
   \caption{
   Performance profiles of AMP (with OBBT) and BARON.  In (a), the x axis plots the 
  optimality gap of the algorithms and the y axis plots fraction of instances. Plot (a) tracks the number of instances where an algorithm is able achieve the specified optimality gap. In (b), the x axis denotes the run time ratio of an algorithm with the best run time of any algorithm. The y axis denotes the fraction of instances. Plot (b) tracks the number of times an algorithm's run time is within a specified factor of the best run time of any algorithm.
  In both figures, higher is better. Overall, AMP performs better than BARON on $p$ proportion of instances within a factor of the best gap and with the best run times.}
   \label{Fig:baron_ext}
\end{figure}


\begin{landscape}
\begin{table}[htp]
\scriptsize
\caption{Performance summary of AMP with OBBT on all instances. 
    Here, we compare the run times of BARON, BT-AMP with $\Delta=8$ and PBT-AMP with $\Delta=10$. CPLEX and Gurobi are the underlying solvers for AMP. Values under ``Gap" and ``$T,T^+$" are in \% and seconds, respectively.``Inf" implies that the solver failed to provide a bound within the prescribed time limit.}
\begin{tabular}{l|rr|rrrrr|rrrrr}
\toprule
& \multicolumn{2}{|c|}{$\text{BARON}$} & \multicolumn{1}{c}{BT} & \multicolumn{2}{c}{AMP-cpx} & \multicolumn{2}{c|}{AMP-grb} & \multicolumn{1}{c}{PBT} & \multicolumn{2}{c}{AMP-cpx} & \multicolumn{2}{c}{AMP-grb} \\
\cmidrule(lr){2-3} \cmidrule(lr){4-4} \cmidrule(lr){5-6} \cmidrule(lr){7-8} \cmidrule(lr){9-9} \cmidrule(lr){10-11}
\cmidrule(lr){12-13}
Instances & Gap & $T$ & $T^{+}$ & Gap & $T$ & Gap & $T$ & $T^{+}$ & Gap & $T$ & Gap & $T$ \\
\midrule
fuel & GOpt & 0.03 & 0.01 & GOpt & 0.03 & GOpt & 0.03 & 0.01 & GOpt & 0.04 & GOpt & 0.03 \\
ex1223a & GOpt & 0.02 & 19.71 & GOpt & 0.01 & GOpt & 0.01 & 0.30 & GOpt & 0.01 & GOpt & 0.02 \\
ex1264 & GOpt & 1.44 & 12.47 & GOpt & 1.77 & GOpt & 1.32 & 0.72 & GOpt & 1.48 & GOpt & 1.24 \\
ex1265 & GOpt & 13.3 & 6.02 & GOpt & 0.25 & GOpt & 0.88 & 1.02 & GOpt & 0.26 & GOpt & 0.74 \\
ex1266 & GOpt & 10.81 & 13.75 & GOpt & 0.12 & GOpt & 0.06 & 1.30 & GOpt & 0.04 & GOpt & 0.06 \\
eniplac & GOpt & 207.37 & 16.04 & GOpt & 1.13 & GOpt & 1.36 & 3.34 & GOpt & 1.17 & GOpt & 1.59 \\
util & GOpt & 0.10 & 9.92 & GOpt & 0.17 & GOpt & 0.19 & 0.61 & GOpt & 0.14 & GOpt & 0.16 \\
meanvarx & GOpt & 0.05 & 20.69 & GOpt & 95.23 & GOpt & 59.33 & 3.53 & GOpt & 13.62 & GOpt & 13.31 \\
blend029 & GOpt & 2.46 & 15.05 & GOpt & 1.04 & GOpt & 1.56 & 0.80 & GOpt & 0.88 & GOpt & 0.95 \\
blend531 & GOpt & 111.79 & 44.67 & GOpt & 38.60 & GOpt & 22.56 & 477.94 & GOpt & 39.89 & GOpt & 20.12 \\
blend146 & 2.20 & TO & 30.95 & 24.92 & TO & 0.10 & TO & 26.66 & 24.98 & TO & 23.69 & TO \\
blend718 & 175.10 & TO & 28.76 & GOpt & 1332.66 & GOpt & 1335.93 & 20.8 & 14.65 & TO & GOpt & 868.14 \\
blend480 & GOpt & 326.95 & 137.62 & 0.21 & TO & GOpt & 108.93 & 1699.18 & 8.78 & TO & GOpt & 2466.17 \\
blend721 & GOpt & 548.9 & 29.44 & GOpt & 646.92 & GOpt & 181.88 & 23.92 & GOpt & 93.28 & GOpt & 112.91 \\
blend852 & 0.08 & TO & 41.73 & GOpt & 749.03 & GOpt & 392.99 & 29.40 & GOpt & 217.62 & GOpt & 48.79 \\
wtsM2\_05 & GOpt & 153.30 & 14.82 & 0.24 & TO & 0.02 & TO & 0.34 & 0.22 & TO & GOpt & 2875.17 \\
wtsM2\_06 & GOpt & 228.18 & 15.52 & 0.01 & TO & 0.01 & TO & 0.33 & 0.02 & TO & GOpt & 1957.59 \\
wtsM2\_07 & GOpt & 759.96 & 16.15 & 0.26 & TO & 0.30 & TO & 0.22 & 0.04 & TO & 1.59 & TO \\
wtsM2\_08 & 388.62 & TO & 21.54 & 14.37 & TO & 14.72 & TO & 0.90 & 20.08 & TO & 19.94 & TO \\
wtsM2\_09 & Inf & TO & 42.28 & 61.99 & TO & 64.53 & TO & 13.96 & 56.72 & TO & 55.85 & TO \\
wtsM2\_10 & 76.48 & TO & 15.73 & 0.10 & TO & 0.07 & TO & 0.32 & 0.22 & TO & 0.22 & TO \\
wtsM2\_11 & 107.56 & TO & 22.64 & 9.76 & TO & 3.74 & TO & 1.09 & 13.81 & TO & 9.18 & TO \\
wtsM2\_12 & 85.35 & TO & 39.10 & 11.90 & TO & 11.92 & TO & 3.44 & 6.03 & TO & 11.02 & TO \\
wtsM2\_13 & 54.04 & TO & 111.39 & 2.01 & TO & 3.95 & TO & 17.00 & 2.17 & TO & 2.10 & TO \\
wtsM2\_14 & 46.24 & TO & 19.09 & 6.64 & TO & 4.71 & TO & 1.10 & 1.93 & TO & 1.93 & TO \\
wtsM2\_15 & Inf & TO & 14.93 & 0.29 & TO & 0.50 & TO & 0.34 & 0.51 & TO & 0.48 & TO \\
wtsM2\_16 & 47.77 & TO & 22.54 & 8.76 & TO & 6.91 & TO & 1.18 & 9.40 & TO & 5.11 & TO \\
lee1 & GOpt & 145.55 & 13.28 & GOpt & 12.50 & GOpt & 13.55 & 0.22 & 10.00 & TO & 0.01 & TO \\
lee2 & GOpt & 590.08 & 14.92 & 0.58 & TO & 0.37 & TO & 5.35 & 0.43 & TO & 0.07 & TO \\
meyer4 & 80.40 & TO & 15.78 & GOpt & 4.22 & GOpt & 4.47 & 238.47 & GOpt & 14.76 & GOpt & 13.68 \\
meyer10 & 239.70 & TO & 44.68 & 9.74 & TO & GOpt & 2925.36 & 63.79 & GOpt & TO & GOpt & 1189.71 \\
meyer15 & 2556.37 & TO & 3877.09 & 3.44 & TO & 0.08 & TO & 17868.96 & GOpt & TO & GOpt & TO \\
\bottomrule
\end{tabular}
\label{Tab:baron_ext}
\end{table}
\end{landscape}

\subsubsection{Tuned $\Delta$ Parameter} 
In Table \ref{Tab:best_delta}, we show the results of AMP (with OBBT) when $\Delta$ is tuned for each problem instance and these results are compared with BARON. {For the purposes of this article, $\Delta$ is tuned by running AMP with $\Delta=\{4,8,10,16,32\}$ on each instance. We then choose the value of $\Delta$ that provides the best lower bound (global optimal in many cases) in the minimum amount of computation time. This tuned value of $\Delta$ is denoted by $\Delta^{*}$ in Table \ref{Tab:best_delta}. Developing adaptive and automatic tuning heuristic-algorithms for computing the value of $\Delta$ remains an open question and is a subject of future work.}
As the performance of AMP is consistently the strongest with Gurobi, we present those results.
Overall, this table shows the best results for AMP. Column two of this table shows the run times of BARON. Similar to table \ref{Tab:baron_ext}, the time limit for BARON is the sum of 3600 seconds and the maximum of the run times of the BT and PBT algorithms (no time limit on BT and PBT). Column three tabulates the performance of BT-AMP with Gurobi by choosing the best $\Delta$ parameter for each instance. Overall, BT-AMP and PBT-AMP performed better than BARON on 26 out of 32 instances. As discussed in detail in section \ref{subsubsec:without_param}, similar observations about the performance of our algorithms also hold for this table. Again, the performance of PBT, despite the use of discrete optimization, indicates that PBT is the strongest bound-tightening procedure.
On the large MINLP instance \emph{meyer15}, PBT has large computational overhead, but this overhead pays off when AMP converges to the global optimum in 538.56 seconds. As noted in the earlier remark, this was an open instance prior to this work.  

Table \ref{Tab:baron_ext} is summarized with the cumulative distribution plot shown in Figure \ref{Fig:best_delta}. Figure \ref{Fig:best_delta}(a) indicates that AMP, BT-AMP and PBT-AMP with Gurobi are better than BARON in finding the best lower bounds. Though the proportion of instances for which global optima are attained is not significantly different than BARON, the proportion of instances for which better lower bounds are found using AMP-based algorithms is larger. Also, the advantages of BT and PBT-based bound-tightening in AMP is evident from the fact that the proportion of instances that find global optimum is higher. 
As expected, Figure \ref{Fig:best_delta}(b) suggests that AMP with Gurobi is overall faster than BARON on the easiest instances, but on harder instances this speed is tempered by a degradation in solution quality.

\vspace{-0.65cm}
\begin{figure}[htp]
   \centering
   %
   \subfigure[Comparison on Best Gap \%]{
   \includegraphics[width=0.75\textwidth]{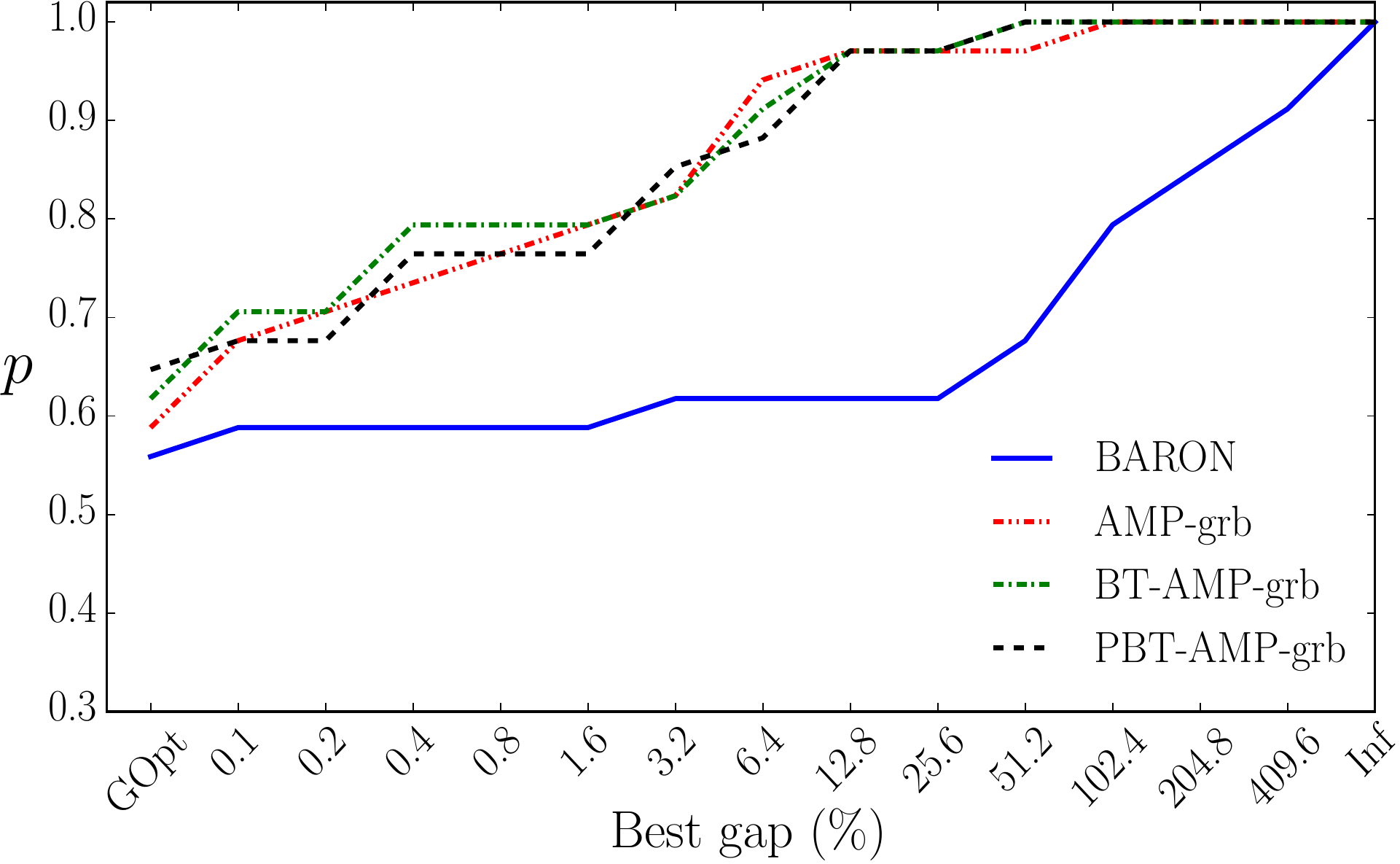}}
   \subfigure[Comparison on Best CPU Times]{
   \includegraphics[width=0.75\textwidth]{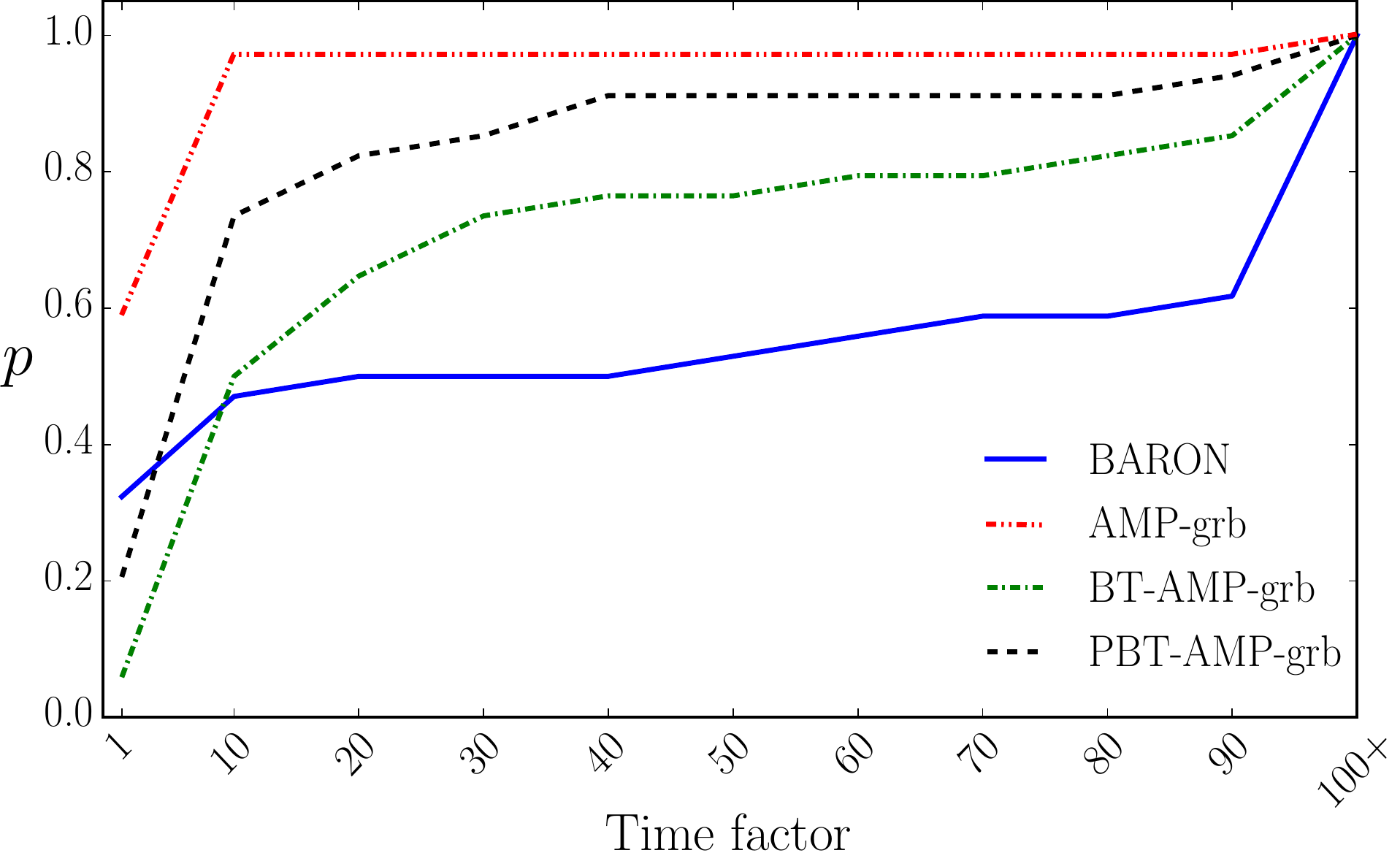}} 
   \caption{
      Performance profiles of AMP (with OBBT and tuned $\Delta$) and BARON.  In (a), the x axis plots the 
  optimality gap of the algorithms and the y axis plots fraction of instances. Plot (a) tracks the number of instances where an algorithm is able achieve the specified optimality gap. In (b), the x axis denotes the run time ratio of an algorithm with the best run time of any algorithm. The y axis denotes the fraction of instances. Plot (b) tracks the number of times an algorithm's run time is within a specified factor of the best run time of any algorithm.
  In both figures, higher is better. Overall, AMP-based algorithms perform better than BARON on $p$ proportion of instances within a factor of the best gap and with the best run times.}
   \label{Fig:best_delta}
\end{figure}



\begin{landscape}
\begin{table}[htp]
    \centering
    \scriptsize
    \caption{Summary of the performance of AMP with and without OBBT on all instances. 
    Here, we compare the run times of BARON, BT-AMP and PBT-AMP with tuned values of $\Delta$ for 
    each instance. Values under ``Gap'' and ``$T,T^+$'' are in \% and seconds, respectively.``Inf'' implies that the solver failed to provide a bound within the prescribed time limit. Gap values shown within parenthesis are evaluated using global optimum values instead of the best-found upper bound by AMP. For each instance, the bold face font represents best run time or the best optimality gap (if the solve times out)}
    \begin{tabular}{l|rr|rrrr|rrrr}
\toprule
& \multicolumn{2}{c}{BARON} & \multicolumn{4}{|c|}{BT-AMP-grb} & \multicolumn{4}{c}{PBT-AMP-grb} \\
\cmidrule(lr){2-3} \cmidrule(lr){4-7} \cmidrule(lr){8-11}
Instances & Gap & $T$ & $\Delta^{*}$ & Gap & $T^{+}$ & $T$ & $\Delta^{*}$ & Gap & $T^{+}$ & $T$ \\
\midrule
fuel & \textbf{GOpt} & \textbf{0.03} & 8 & GOpt & 0.01 & 0.04 & 8 & GOpt & 0.01 & 0.04 \\
ex1223a & \textbf{GOpt} & \textbf{0.02} & 8 & GOpt & 19.71 & 0.01 & 32 & GOpt & 0.31 & 0.01 \\
ex1264 & GOpt & 1.44 & 4 & GOpt & 12.47 & 0.59 & 4 & {\bf GOpt} & {\bf 0.84} & {\bf 0.49} \\
ex1265 & GOpt & 13.30 & 16 & GOpt & 6.02 & 0.45 & 16 & {\bf GOpt} & {\bf 0.92} & {\bf 0.46}\\
ex1266 & GOpt & 10.81 & 8 & GOpt & 13.75 & 0.06 & 4 & {\bf GOpt} & {\bf 1.18} & {\bf 0.09} \\
eniplac & GOpt & 207.37 & 32 & GOpt & 16.04 & 0.45 & 32 & {\bf GOpt} & {\bf 3.34} & {\bf 0.64} \\
util & {\bf GOpt} & {\bf 0.10} & 4 & GOpt & 9.92 & 0.08 & 4 & GOpt & 0.56 & 0.09 \\
meanvarx & {\bf GOpt} & {\bf 0.05} & 16 & GOpt & 20.69 & 21.17 & 16 & GOpt & 3.45 & 15.57 \\
blend029 & GOpt & 2.46 & 32 & GOpt & 15.05 & 0.92 & 16 & {\bf GOpt} & {\bf 1.09} & {\bf 1.33} \\
blend531 & GOpt & 111.79 & 8 & {\bf GOpt} & {\bf 44.67} & {\bf 22.56} & 32 & GOpt & 239.17 & 84.97 \\
blend146 & 2.20 & TO & 8 & {\bf 0.10} & {\bf 30.95} & {\bf TO} & 8 & 2.10 & 23.06 & TO \\
blend718 & 175.10 & TO & 32 & {\bf GOpt} & {\bf 28.76} & {\bf 889.28} & 8 & GOpt & 21.80 & 1101.56 \\
blend480 & GOpt & 326.95 & 8 & {\bf GOpt} & {\bf 137.62} & {\bf 108.93} & 16 & GOpt & 948.27 & 2185.03 \\
blend721 & GOpt & 548.90 & 16 & GOpt & 29.44 & 92.27 & 32 & {\bf GOpt} & {\bf 9.87} & {\bf 90.91} \\
blend852 & 0.08 & TO & 16 & GOpt & 41.73 & 323.86 & 16 & {\bf GOpt} & {\bf 14.16} & {\bf 323.31} \\
wtsM2\_05 & {\bf GOpt} & {\bf 153.30} & 16 & GOpt & 14.82 & 2482.73 & 16 & GOpt & 0.33 & 2483.36 \\
wtsM2\_06 & {\bf GOpt} & {\bf 228.18} & 16 & GOpt & 15.52 & 2058.92 & 16 & GOpt & 0.39 & 2057.20 \\
wtsM2\_07 & {\bf GOpt} & {\bf 759.96} & 8 & 0.30 & 16.15 & TO & 8 & 0.30 & 0.24 & TO \\
wtsM2\_08 & 388.62 & TO & 4 & {\bf 8.25} & {\bf 21.54} & {\bf TO} & 4 & 8.25 & 24.43 & TO \\
wtsM2\_09 & Inf & TO & 16 & 44.57 & 42.80 & TO & 16 & {\bf 43.66} & {\bf 288.88} & {\bf TO} \\
wtsM2\_10 & 76.48 & TO & 8 & 0.07 & 15.73 & TO & 8 & {\bf 0.06} & {\bf 0.39} & {\bf TO} \\
wtsM2\_11 & 107.56 & TO & 8 & 3.74 & 22.64 & TO & 8 & {\bf 2.39} & {\bf 1.11} & {\bf TO} \\
wtsM2\_12 & 85.35 & TO & 4 & (6.89) 7.29 & 39.10 & TO & 8 & (6.95) {\bf 7.34}  & {\bf 3.70} & {\bf TO} \\
wtsM2\_13 & 54.04 & TO & 8 & {\bf 3.95} & {\bf 111.39} & {\bf TO} & 4 & 6.92 & 130.31 & TO \\
wtsM2\_14 & 46.24 & TO & 32 & 2.71 & 19.09 & TO & 8 & {\bf 2.25} & {\bf 0.67} & {\bf TO} \\
wtsM2\_15 & Inf & TO & 32 & {\bf 0.20} & {\bf 14.93} & {\bf TO} & 16 & 0.28 & 0.35 & TO \\
wtsM2\_16 & 47.77 & TO & 4 & (2.61) {\bf 5.81}  & {\bf 22.54} & {\bf TO} & 4 & (3.04) 6.24  & 24.41 & TO \\
lee1 & GOpt & 145.55 & 8 & GOpt & 13.28 & 13.55 & 8 & {\bf GOpt} & {\bf 0.27} & {\bf 13.68} \\
lee2 & {\bf GOpt} & {\bf 590.08} & 16 & 0.36 & 14.92 & TO & 4 & 0.38 & 2.96 & TO \\
meyer4 & 80.40 & TO & 8 & {\bf GOpt} & {\bf 15.78} & {\bf 4.47} & 4 & GOpt & 77.61 & 8.45 \\
meyer10 & 239.70 & TO & 4 & {\bf GOpt} & {\bf 44.68} & {\bf 760.36} & 4 & GOpt & 34.67 & 775.74 \\
meyer15 & 2556.37 & TO & 4 & 0.02 & 3877.09 & TO & 4 & {\bf GOpt} & {\bf 19218.84} & {\bf 538.56} \\
\bottomrule
    \end{tabular}
    \label{Tab:best_delta}
\end{table}
\end{landscape}

\subsection{Sensitivity of MINLP structure}
\label{Subsec:when_AMP}
In Figure \ref{Fig:baronratio_3600}, we classify the MINLP instances to understand how problem structure influences the success of AMP.
There are various possible classification measures and we use the total number of variables that are part of multilinear terms. This is because our algorithm heavily depends on multi-variate partitioning on the nonlinear terms. Thus, it is likely that a measure like this influences the performance of AMP. Consider the following simple example that describes the measure clearly: Let $x_i, \ \forall i=1,\ldots,n$ be the variables in a problem with a linear objective and one nonlinear constraint,  $\left(\prod_{i=1}^{k}x_i + \prod_{i=2}^{k+1}x_i\right) \geqslant M$, such that $2 \leqslant k \leqslant n-1$. Then, the number of variables in mutlilinear terms is $k+1$. 

It is clear from the figure that both AMP and BT-AMP performs very well on instances that have large numbers of variables ($\geqslant$25) in the multilinear terms. We also observe that while executing OBBT incurs a computational overhead (ratio up to $\approx$16), there are many instances below the unit ratio value (blue dashed line).  Overall, these plots support the observation that increasing the number of variables in multilinear terms are indicator of success when executing AMP.

\begin{figure}[htp]
   \centering
   \subfigure[AMP]{
   \includegraphics[scale=0.3159]{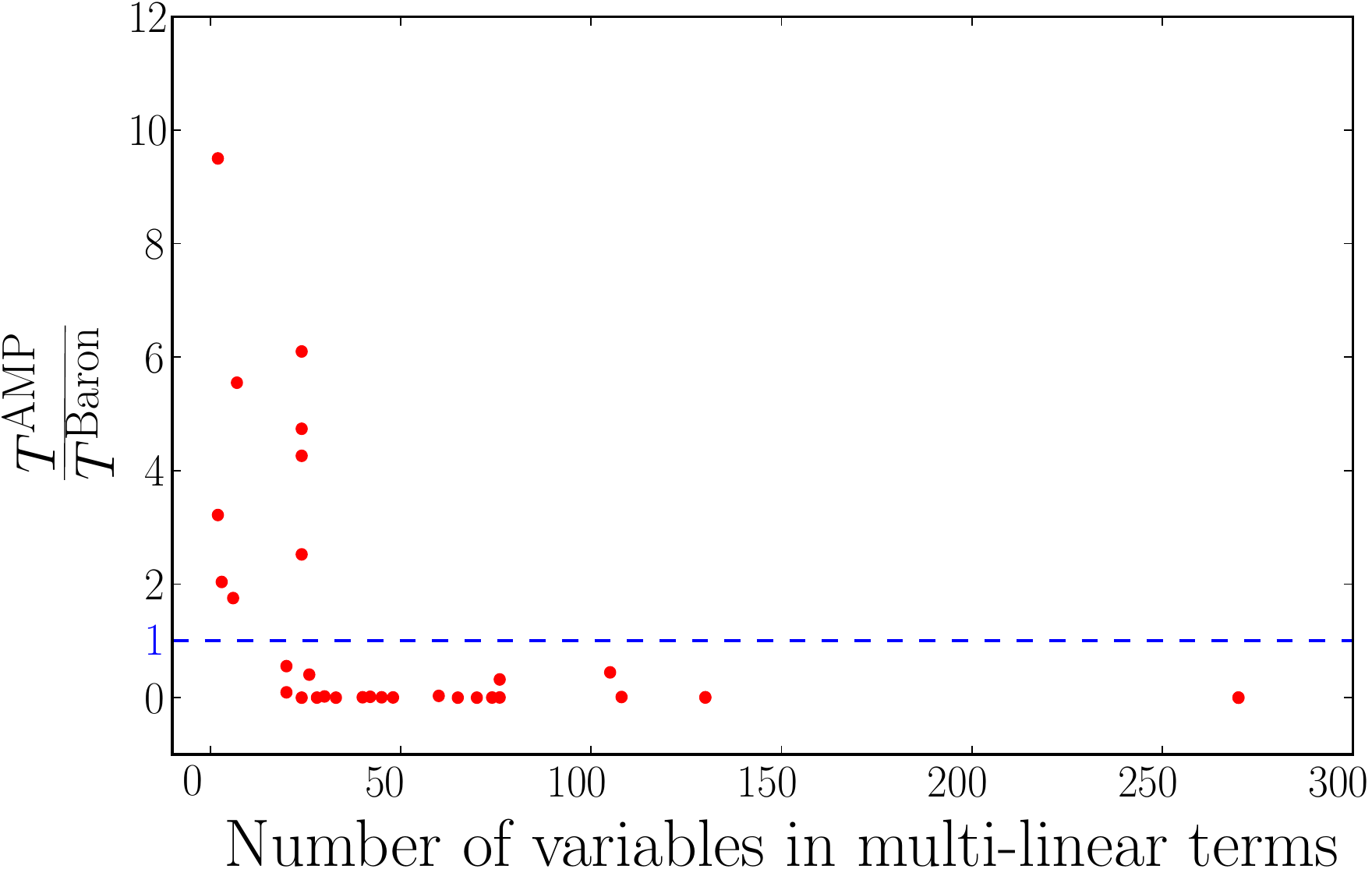}}
    \subfigure[BT-AMP]{
   \includegraphics[scale=0.3569]{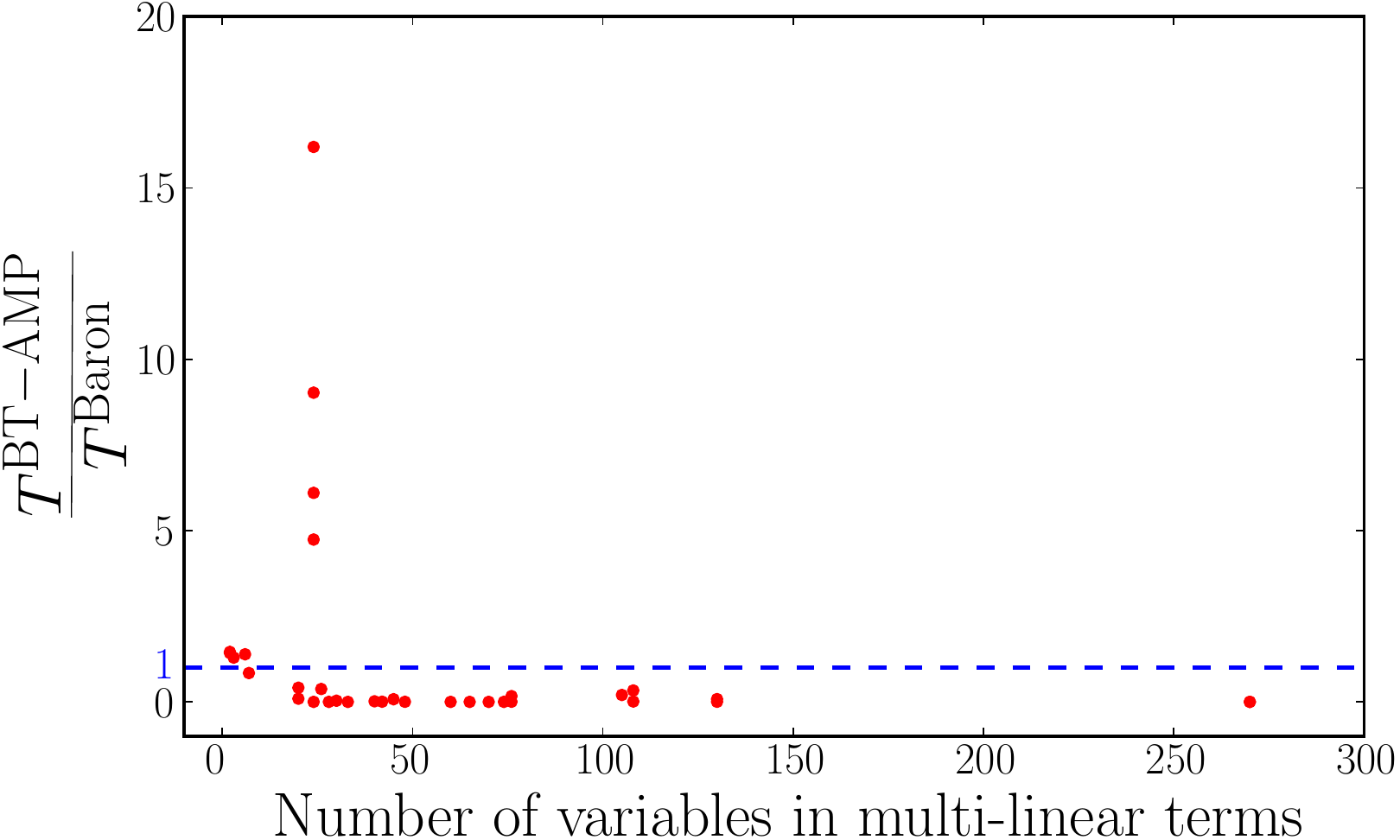}}
   \caption{Illustration of the ratio of run times of AMP and BT-AMP algorithms (with tuned parameters) to BARON. The y axis denotes the ratio and the x axis denotes the total number of variables in mutlilinear terms in a given MINLP instance. The blue dashed line indicates a ratio of 1. All red points correspond to a single instance. A point below the blue line indicates a ratio in favor of AMP.}
   \label{Fig:baronratio_3600}
\end{figure}


\subsection{OBBT Results for \textit{meyer15}}
One of the primary observations made in this paper is the importance of MIP-based sequential OBBT on medium-scale MINLPs.  However, for a given large-scale MINLP, one of the drawbacks of BT and PBT is that it solves MILPs to tighten the variable bounds. Solving MILPs can be time consuming, in particular on instances like \textit{meyer15}\footnote{\textit{meyer15} is a generalized pooling problem instance. These problems are typically considered hard (bilinear) MINLP for global optimization \cite{misener2011apogee,bussieck2003minlplib}}. Here, we focus on the run time issues associated with solving MILPs, suggest approaches for managing that run time and still get much of their benefits in bound-tightening.

Table \ref{Tab:meyer15_a} summarizes the run times of BT and PBT on \textit{meyer15} for various values of $\Delta$. As shown in the first row of this table, the run time varies drastically when the MILP is solved to optimality in every iteration of BT and PBT. To reduce this run time, we imposed a time limit on every $\min$- and $\max$-MILP of \textit{ten seconds}. Since early termination of MILP does not guarantee optimal primal-feasible solutions, incumbent solutions are not valid for bound-tightening. Instead, we use the best lower bound maintained by the solver. This ensures the validity of the tightened bounds. As shown in the second row of Table \ref{Tab:meyer15_a}, the run times of PBT are drastically reduced. 

Table \ref{Tab:meyer15_b} summarizes the results of AMP based on the tightened variable bounds presented in Table \ref{Tab:meyer15_a}. Interestingly, on \textit{meyer15}, we observed that AMP converges to near optimal solutions (sometimes, even better than solving full MILPs) when the time limit on MILP solvers is imposed. This is an important feature for tuning the time spent tightening bounds vs bound solution quality.
Further, this intriguing result suggests further study of MIP-based relaxations for OBBT of MINLPs, which we delegate for future work. 








\begin{table}[htp]
    \centering
    \caption{BT and PBT run times on \textit{meyer15} with and without a limit on the run time of every MILP solved during the bound-tightening phase. Here, a 10-second time limit was used. The imposition of this limit reduces the total run time of PBT to close to BT.}
    \begin{tabular}{lrrrrrr}
\toprule
& \multicolumn{1}{c}{BT} & \multicolumn{5}{c}{PBT} \\
\cmidrule(lr){2-2} \cmidrule(lr){3-7}
 &  & $\Delta=4$ & $\Delta=8$ & $\Delta=10$ & $\Delta=16$ & $\Delta=32$ \\
\midrule
Without time limit & 3877 & 19218  & 32130      & 29586       & 11705       & 17868       \\
With time limit & 3830 & 3914   & 4059       & 4014        & 3635        & 3621        \\
\bottomrule
    \end{tabular}
    \label{Tab:meyer15_a}
\end{table}

\setlength\tabcolsep{3 pt}

\begin{table}[htp]
    \centering
    \caption{AMP run times on \textit{meyer15} instance preceded by bound-tightening with (BT-lim, PBT-lim) and without limit (BT, PBT) on run time per iteration. Values under ``Gap" and ``$T$" are in \% and seconds, respectively. Bold font represents the best result in each row.}
    \begin{tabular}{lrrrrrrrrrr}
\toprule
& \multicolumn{2}{c}{$\Delta=4$} & \multicolumn{2}{c}{$\Delta=8$} & \multicolumn{2}{c}{$\Delta=10$} & \multicolumn{2}{c}{$\Delta=16$} & \multicolumn{2}{c}{$\Delta=32$}  \\
\cmidrule(lr){2-3} \cmidrule(lr){4-5} \cmidrule(lr){6-7} \cmidrule(lr){8-9} \cmidrule(lr){10-11}
              & Gap           & $T$           & Gap  & $T$    & Gap           & $T$         & Gap           & $T$           & Gap           & $T$           \\
\midrule
After BT      & 0.02          & TO            & 0.08 & TO     & 0.12 & TO & 0.31          & TO            & 0.15          & TO \\
After BT-lim  & 0.37          & TO            & 0.14 & TO     & 0.17 & TO & 0.69 & TO   & 0.90 & TO   \\
After PBT     & GOpt & 536.56 & GOpt& 1600    & GOpt          & TO   & 0.20  & TO            & 0.15          & TO \\
After PBT-lim & 0.37 & TO     & 0.90 & TO     & 0.19          & TO   & 0.70  & TO        & 0.90          & TO \\
\bottomrule
    \end{tabular}
    \label{Tab:meyer15_b}
\end{table}
\setlength\tabcolsep{6 pt} 

\begin{table}
\scriptsize
\caption{Overall structure of the MINLP problems. The first column describes the name of the problem instance. The second column cites the source of the problem. The third column shows the optimal solution. The fourth, fifth and sixth columns show the number of constraints, binary variables, and continuous variables, respectively. The seventh column indicates the partitioned continuous variables. ``ALL'' refers to all variables in mutlilinear terms and ``VC'' refers to variables in the minimum vertex cover as described in \cite{boukouvala2016global}. The final column shows the number of mutlilinear terms.}
\begin{tabular}{lcrrrrrr}
\toprule
Instance  & Ref. & GOpt  & \#Cons &\#BVars& \#CVars         & \#CVars-P          & \#ML  \\
          &           &       &        &       &            &                   &       \\
\hline
NLP1	  & \cite{nagarajan2016tightening}  & 7049.248      & 14	&	0	& 8     &ALL    &	5 	\\
fuel	  & \cite{bussieck2003minlplib}     & 8566.119      & 15	&	3	& 12    &VC     &	3 	\\
ex1223a	  & \cite{bussieck2003minlplib}     & 4.580	        & 9	    &	4	& 3     &VC     &	3 	\\
ex1264	  & \cite{bussieck2003minlplib}     & 8.6	        & 55	&	68	& 20    &VC     &	16 	\\
ex1265	  & \cite{bussieck2003minlplib}     & 10.3	        & 74	&	100	& 30    &VC     &	25 	\\
ex1266	  & \cite{bussieck2003minlplib}     & 16.3	        & 95	&	138	& 42    &VC     &	36 	\\
eniplac	  & \cite{bussieck2003minlplib}     & -132117.083	& 189	&	24	& 117   &VC     &	66	\\
util	  & \cite{bussieck2003minlplib}     & 999.578	    & 167	&	28	& 117   &ALL    &	5	\\
meanvarx  &	\cite{bussieck2003minlplib}     & 14.369	    & 44	&	14	& 21    &VC     &	28	\\
blend029  &	\cite{bussieck2003minlplib}     & 13.359	    & 213	&	36	& 66    &VC     &	28	\\
blend531  & \cite{bussieck2003minlplib}     & 20.039	    & 736	&	104	& 168   &VC     &	146	\\
blend146  & \cite{bussieck2003minlplib}     & 45.297	    & 624	&	87	& 135   &VC     &	104	\\
blend718  & \cite{bussieck2003minlplib}     & 7.394	        & 606	&	87	& 135   &VC     &	100	\\
blend480  &	\cite{bussieck2003minlplib}     & 9.227	        & 884	&	124	& 188   &VC     &	152	\\
blend721  & \cite{bussieck2003minlplib}     & 13.5268	    & 627	&	87	& 135   &VC	    &	104	\\
blend852  & \cite{bussieck2003minlplib}	    & 53.9626	    & 2412	&	120	& 184   &VC	    &	152	\\
wtsM2\_05 &	\cite{misener2013glomiqo}       & 229.7008	    & 152	&	0	& 134   &VC	    &	48	\\
wtsM2\_06 &	\cite{misener2013glomiqo}       & 173.4784	    & 152	&	0	& 134   &VC	    &	48	\\
wtsM2\_07 &	\cite{misener2013glomiqo}       & 80.77892	    & 152	&	0	& 134   &VC	    &	48	\\
wtsM2\_08 &	\cite{misener2013glomiqo}       & 109.4014	    & 335	&	0	& 279   &VC	    &	84	\\
wtsM2\_09 &	\cite{misener2013glomiqo}       & 124.4421	    & 573	&	0	& 517   &ALL	    &	210	\\
wtsM2\_10 &	\cite{misener2013glomiqo}       & 586.68	    & 138	&	0	& 156   &VC	    &	60	\\
wtsM2\_11 &	\cite{misener2013glomiqo}       & 2127.115	    & 252	&	0	& 304   &VC	    &	112	\\
wtsM2\_12 &	\cite{misener2013glomiqo}       & 1201.038	    & 408	&	0	& 517   &VC	    &	220	\\
wtsM2\_13 &	\cite{misener2013glomiqo}       & 1564.958	    & 783	&	0	& 1040  &VC	    &	480	\\
wtsM2\_14 &	\cite{misener2013glomiqo}       & 513.009	    & 205	&	0	& 209   &VC	    &	90	\\
wtsM2\_15 &	\cite{misener2013glomiqo}       & 2446.429	    & 152	&	0	& 134   &VC	    &	48	\\
wtsM2\_16 &	\cite{misener2013glomiqo}       & 1358.663	    & 234	&	0	& 244   &VC	    &	126	\\
lee1	  & \cite{MINLP123}                 & -4640.0824	& 82	&	9	& 40    &VC	    &	24	\\
lee2	  & \cite{MINLP123}                 & -3849.2654	& 92	&	9	& 44    &VC	    &	36	\\
meyer4	  & \cite{MINLP123}                 & 1086187.137	& 118	&	55	& 63    &VC	    &	48	\\
meyer10	  & \cite{MINLP123}                 & 1086187.137	& 423	&	187	& 207   &VC	    &	300	\\
meyer15	  & \cite{MINLP123}                 & 943734.0215	& 768	&	352	& 382   &VC	    &	675	\\
\bottomrule 
\end{tabular}
\label{tab:data}
\end{table}%


\section{Conclusions}
\label{sec:conclusions}

In this work, we developed an approach for adaptively partitioning nonconvex functions in MINLPs. 
We show that an adaptive partitioning of the domains of variables outperforms uniform partitioning, though the latter exhibits better optimality gaps in the first few iterations of the lower-bounding algorithm. We also show that bound-tightening techniques can be applied in conjunction with adaptive partitioning to improve convergence dramatically. We then use combinations of these techniques to develop an algorithm for solving MINLPs to global optimality.
Our numerical experiments on MINLPs with polynomials suggests that this is a very strong approach with an advantage of having very few tuning parameters in contrast to the existing methods. 

We have seen that using a well-designed MIP-based method with adaptive partitioning schemes is an attractive way of tackling MINLPs. With an apriori fixed tolerance, we get global optimum solutions by utilizing the well-developed state-of-the-art MIP solvers. However,
though AMP is relatively faster than the available global solvers, we observed that the computation times remain large for MINLPs with large number of nonconvex terms, thus motivating a multitude of directions for further developments. First, it will be 
important to consider existing classical nonlinear programming techniques, such as dual-based bound-contraction, 
partition elimination within the branch-and-bound search tree, bound-tightening at sub nodes \cite{mouret2009tightening}, and constraint propagation methods \cite{belotti2013bound}, which can tremendously speed-up our algorithm. 
Second, providing apriori guarantees on the size of the added partitions  ($\Delta$) that leads to faster tightening of the relaxations. This will support automatic tuning of $\Delta$ from within AMP. Third, recent developments on generating tight convex hull-reformulation-based cutting planes 
for solving convex generalized disjunctive programs will be very effective for attaining faster convergence to global optimum \cite{trespalacios2016cutting}. Finally, extensions of our methods from polynomial to general nonconvex functions (including fractional exponents, transcendental functions and disjunctions of nonconvex functions) will be another direction that will have relevance to numerous practical applications.

\begin{acknowledgements}
The work was funded by the Center for Nonlinear Studies (CNLS) at LANL and the LANL's directed research and development project "POD: A Polyhedral Outer-approximation, Dynamic-discretization optimization solver". Work was carried out under the auspices of the U.S. DOE under Contract No. DE-AC52-06NA25396. 
\end{acknowledgements}

\bibliographystyle{spmpsci}      
\bibliography{references.bib}

\appendix
\section{Appendix}

\subsection{Sensitivity Analysis of $\Delta$}
One of the important details of MINLP algorithms and approaches is their parameterization. As seen in the earlier sections, AMP is no different. The quality of the solutions depend heavily on the choice of $\Delta$. However, in spite of this problem specific dependence, it is often interesting to identify reasonable default values. Table \ref{Tab:delta_range} presents computational results on all instances for different choices of $\Delta$. From these results, AMP is most effective when $\Delta$ is between 4 and 10.

\begin{table}[htp]
    \centering
    \caption{This table shows a sensitivity analysis of AMP's  performance to the choice of $\Delta$. Here, we bin results by $\Delta \le 4$, $\Delta$ between 4 and 10, and $\Delta > 10$. From these results, it is clear that most of the good choices of $\Delta$ are between 4 and 10 and this is our recommended choice for this parameter. For each instance, the bold face font represents best run time or the best optimality gap (if the solve times out)}
    \begin{tabular}{l|rr|rr|rr}
\toprule
& \multicolumn{2}{c}{$\Delta\leq 4$} & \multicolumn{2}{|c|}{$4<\Delta\leq 10$} & \multicolumn{2}{c}{$\Delta>10$} \\
\cmidrule(lr){2-3} \cmidrule(lr){4-5} \cmidrule(lr){6-7}
Instances & Gap(\%) & $T$ & Gap(\%) & $T$ & Gap(\%) & $T$ \\
\midrule
p1 & GOpt & 0.74 & GOpt & 0.24 & \textbf{GOpt} & \textbf{0.06} \\
p2 & GOpt & 0.60 & GOpt & 0.20 & \textbf{GOpt} & \textbf{0.10} \\
fuel & \textbf{GOpt} & \textbf{0.05} & GOpt & 0.06 & GOpt & 0.07 \\
ex1223a & GOpt & 0.03 & \textbf{GOpt} & \textbf{0.02} & GOpt & 0.02 \\
ex1264 & GOpt & 1.92 & \textbf{GOpt} & \textbf{0.79} & GOpt & 1.03 \\
ex1265 & GOpt & 2.24 & \textbf{GOpt} & \textbf{0.28} & GOpt & 0.80 \\
ex1266 & GOpt & 0.20 & GOpt & 0.23 & \textbf{GOpt} & \textbf{0.16} \\
eniplac & GOpt & 2.39 & GOpt & 1.46 & \textbf{GOpt} & \textbf{0.75} \\
util & GOpt & 2.52 & GOpt & 2.14 & \textbf{GOpt} & \textbf{0.55} \\
meanvarx & GOpt & 967.70 & GOpt & 118.80 & \textbf{GOpt} & \textbf{70.09} \\
blend029 & GOpt & 1.98 & GOpt & 1.33 & \textbf{GOpt} & \textbf{1.00} \\
blend531 & GOpt & 88.60 & GOpt & 74.33 & \textbf{GOpt} & \textbf{49.76} \\
blend146 & 23.34 & TO & \textbf{1.60} & \textbf{TO} & 3.20 & TO \\
blend718 & GOpt & 1263.41 & \textbf{GOpt} & \textbf{581.68} & GOpt & 889.72 \\
blend480 & 0.10 & TO & \textbf{0.02} & \textbf{TO} & 0.02 & TO \\
blend721 & GOpt & 486.17 & \textbf{GOpt} & \textbf{44.13} & GOpt & 176.11 \\
blend852 & 0.01 & TO & \textbf{GOpt} & \textbf{144.26} & GOpt & 322.80 \\
wtsM2\_05 & GOpt & 2236.45 & GOpt & 2545.41 & \textbf{GOpt} & \textbf{386.95} \\
wtsM2\_06 & 0.02 & TO & \textbf{GOpt} & \textbf{519.38} & GOpt & 972.20 \\
wtsM2\_07 & 0.77 & TO & 0.57 & TO & \textbf{0.54} & \textbf{TO} \\
wtsM2\_08 & \textbf{7.92} & \textbf{TO} & 9.28 & TO & 11.96 & TO \\
wtsM2\_09 & \textbf{7.47} & \textbf{TO} & 68.58 & TO & 68.58 & TO \\
wtsM2\_10 & 0.11 & TO & 0.11 & TO & \textbf{0.10} & \textbf{TO} \\
wtsM2\_11 & \textbf{6.10} & \textbf{TO} & 6.27 & TO & 10.41 & TO \\
wtsM2\_12 & 6.49 & TO & 8.69 & TO & \textbf{4.00} & \textbf{TO} \\
wtsM2\_13 & 7.37 & TO & \textbf{2.03} & \textbf{TO} & 10.27 & TO \\
wtsM2\_14 & 4.06 & TO & 5.59 & TO & \textbf{1.43} & \textbf{TO} \\
wtsM2\_15 & \textbf{0.17} & \textbf{TO} & 0.17 & TO & 0.57 & TO \\
wtsM2\_16 & 5.73 & TO & 8.17 & TO & \textbf{5.25} & \textbf{TO} \\
lee1 & GOpt & 73.19 & \textbf{GOpt} & \textbf{13.61} & 0.03 & TO \\
lee2 & 0.38 & TO & \textbf{0.02} & \textbf{TO} & 0.08 & TO \\
meyer4 & GOpt & 64.82 & \textbf{GOpt} & \textbf{5.33} & GOpt & 20.74 \\
meyer10 & GOpt & 684.63 & \textbf{GOpt} & \textbf{133.47} & 9.70 & TO \\
meyer15 & \textbf{0.10} & \textbf{TO} & 0.33 & TO & 0.15 & TO \\
\hline \ \
Summary & \multicolumn{2}{c}{7} & \multicolumn{2}{c}{14} & \multicolumn{2}{c}{14} \\
\bottomrule
    \end{tabular}
    \label{Tab:delta_range}
\end{table}

\subsection{Logarithmic and Linear Encoding of Partition Variables}

In section \ref{sec:problem}, the discussion on piecewise convex relaxations described formulations that encoded the partition variables with a linear number of variables and a logarithmic number of variables \cite{Vielma2009}.
Table \ref{Tab:log_compare} compares the performance of AMP using both formulations. Despite fewer variables in the logarithmic formulation, this encoding is only effective on a few problems, generally on problems that require a significant number of partitions. These results suggest that when the logarithmic encoding has nearly the same number of partition variables as the linear encoding, the linear encoding is more effective. 

\begin{table}[htp]
    \centering
    \caption{This table compares the logarithmic formulation of partition variables with the linear representation. Each column indicates the formulation with the fastest runtime for different choices of $\Delta$. The last column enumerates the number of times the logarithmic formulation is better.}
    \begin{tabular}{lrrrrrr}
\toprule
Instances & $F (\Delta=4)$ & $F (\Delta=8)$ & $F (\Delta=10)$ & $F (\Delta=16)$ & $F (\Delta=32)$ & Total \\
\midrule

eniplac & lin & lin & {log} & {log} & {log} & 3 \\
blend531 & lin & lin & {log} & lin & lin & 1 \\
blend146 & lin & lin & {log} & lin & lin & 1 \\
blend718 & lin & lin & lin & lin & {log} & 1 \\
blend480 & lin & lin & {log} & lin & lin & 1 \\
blend721 & lin & lin & lin & lin & lin & 0 \\
blend852 & {log} & lin & {log} & lin & {log} & 3 \\
wtsM2\_05 & lin & lin & lin & lin & lin & 0 \\
wtsM2\_06 & lin & lin & lin & lin & lin & 0 \\
wtsM2\_07 & lin & lin & lin & lin & lin & 0 \\
wtsM2\_08 & {log} & lin & {log} & lin & {log} & 3 \\
wtsM2\_09 & lin & {log} & {log} & {log} & {log} & 4 \\
wtsM2\_10 & lin & lin & lin & lin & lin & 0 \\
wtsM2\_11 & lin & lin & lin & lin & lin & 0 \\
wtsM2\_12 & lin & lin & lin & lin & lin & 0 \\
wtsM2\_13 & lin & lin & lin & lin & lin & 0 \\
wtsM2\_14 & lin & lin & lin & lin & lin & 0 \\
wtsM2\_15 & {log} & lin & lin & {log} & lin & 2 \\
wtsM2\_16 & lin & lin & lin & lin & lin & 0 \\
lee1 & {log} & {log} & {log} & {log} & {log} & 5 \\
lee2 & {log} & lin & lin & lin & lin & 1 \\
meyer4 & {log} & {log} & lin & lin & {log} & 3 \\
meyer10 & lin & lin & {log} & {log} & lin & 2 \\
meyer15 & {log} & {log} & lin & lin & {log} & 3 \\
\hline ${Total}$ & 7 & 4 & 9 & 5 & 8 & 33 \\
\bottomrule
    \end{tabular}
    \label{Tab:log_compare}
\end{table}

\end{document}